\numberwithin{equation}{section}
\newtheorem{thm}{Theorem}[section]
\newtheorem{lem}{Lemma}[section]
\newtheorem{prop}{Proposition}[section]
\newtheorem{defn}{Definition}[section]
\theoremstyle{definition}
\newtheorem{assum}{Assumption}[section]
\newtheorem{rem}{Remark}[section]
\theoremstyle{definition}
\newcommand{\Cov}{\text{Cov}}
\numberwithin{equation}{section}
\newcommand{\be}{\begin{equation}}
	\newcommand{\ee}{\end{equation}}
\newcommand{\bes}{\begin{equation*}}
	\newcommand{\ees}{\end{equation*}}
\newcommand{\bP}{\mathbb{P}}
\newcommand{\R}{\mathbf{R}}
\newcommand{\Z}{\mathbf{Z}}
\newcommand{\N}{\mathbf{N}}
\newcommand{\E}{\mathbb{E}}
\newcommand{\bean}{\begin{eqnarray*}}
	\newcommand{\eean}{\end{eqnarray*}}
\newcommand{\ve}{\varepsilon}
\newcommand{\K}{\mathbf{K}}
\title[Blowup for multiplicative SHE]{Blowup for the multiplicative stochastic heat equation with superlinear drift}
\author{Mathew Joseph \and Shubham Ovhal}
\thanks{}
\address{Mathew Joseph\\ Statmath Unit\\ Indian Statistical Institute\\ 8th Mile Mysore Road\\ Bangalore 560059
} \email{m.joseph@isibang.ac.in}
\address{Shubham Ovhal\\ Statmath Unit\\ Indian Statistical Institute\\ 8th Mile Mysore Road\\ Bangalore 560059
} \email{rs\_math2104@isibang.ac.in}
\begin{document}
\maketitle

\begin{abstract}
We consider the stochastic heat equation with multiplicative white noise: $\partial_t u =\partial_x^2u + b(u) +\sigma(u) \dot W$, both on $[0,1]$ and $\R$. In the case of $[0,1]$ we show that the finite Osgood criterion on $b$ is a necessary and sufficient condition for finite-time blowup, under fairly general conditions on $\sigma$. In the case of $\R$ we show instantaneous explosion when we start with initial profile $u_0\equiv 1$. Our work introduces a novel approach that not only extends the work of \cite{foon-khos-nual} which dealt with bounded $\sigma$, but also provides an alternate route to proving such results. A key ingredient is a comparison result which shows that the solution on $\R$ stays above the corresponding solution on $[0,1]$ with Dirichlet boundary conditions. 
\\	\\
\noindent{\it Keywords:} 
Stochastic heat equation, blowup, Osgood's condition.\\

\noindent{\it \noindent AMS 2010 subject classification:} 60H10, 60H15.

\end{abstract}

\section{Introduction}
We consider the solution $v_t(x),\, t\ge 0, \, x\in [0,1]$ of the following stochastic heat equation
\be \tag{SHE:D} \label{eq:SHE:D}
    \partial_t v_t(x) = \frac{1}{2} \partial_x^2 v_t(x) + b\left(v_t(x)\right) + \sigma\left(v_t(x)\right) \dot W(t, x),\qquad t\ge 0, \, x\in [0,1],
\ee
with {\it Dirichlet} boundary conditions. The initial profile $v_0$ is nonnegative. The noise $\dot W(t,x)$ is space-time white noise: the centered, generalized Gaussian random field with covariance
\bes
\Cov[\dot W(t, x), \dot W(s, y)] = \delta(t-s) \delta(x-y).
\ees
The function $\sigma: \R \to \R$ is {\it locally} Lipschitz and satisfies $\sigma(0)=0$. The function $b: \R \to \R_{\ge 0}$ is nonnegative, {\it non-decreasing} and {\it locally} Lipschitz. As $b$ and $\sigma$ are locally Lipschitz, we have existence and uniqueness of local solutions \cite{spde-utah, wals}. Moreover the solutions remain nonnegative \cite{muel}. We will make a few more additional assumptions on $b$ and $\sigma$. Let $f(x)$ be defined as:
\[ f(x) := \frac{b(x)}{x}. \]
\begin{assum}\label{ass:D} Along with the above assumptions on $b$ and $\sigma$ we make the following additional assumptions:
\begin{enumerate}
\item \label{ass:1} For all $X>0$ we have $\sup_{x\le X} |\sigma(x)| \le X g(X)$ where $g$ has the following property: There exists $\gamma>0$ such that for all $C\ge 1$
\[ \sup_{x\ge \gamma}\left|\frac{g(Cx)}{g(x)}\right| <\infty.\]
 \item \label{ass:2} There exists an $\eta>0$ such that for $x\ge 1$
\[ |g(x)| \lesssim f(x)^{\frac14-\eta}.\]
\item \label{ass:3} There exists $\mathbf C_1>1$ such that for all $X$ large enough:
\[ \int_X^{\mathbf C_1X}\frac{dx}{b(x)} > \frac{32X}{b(X/32)}.\]
\end{enumerate}
\end{assum}
The aim of this article is to show finite-time blowup under the {\it finite Osgood condition} on $b$:
\be \label{eq:osgood}
    \int_{\theta}^\infty \frac{dx}{b(x)} < \infty \quad \text{ for some } \theta>0.
\ee
We first address the blowup question for \eqref{eq:SHE:D}.
\begin{thm}\label{thm:dirichlet} Let Assumption \ref{ass:D} hold. Let $v_0$ be a nonnegative continuous function which is positive in an open interval. 
\begin{enumerate}
\item \label{state:1} Suppose \eqref{eq:osgood} holds and for each $\mu>0$ we have $\inf_{x\ge \mu} \sigma(x)>0$. Let $0<a<\frac12$. Then for each $\delta>0$ the following holds.
\be \label{eq:D:blow} \bP\left(\sup_{t\le \delta}\left(\inf_{x\in [a, 1-a]}v_t(x)\right)=\infty\right)>0.\ee
\item \label{state:2} Suppose (1.1) does not hold. Then global solutions to \eqref{eq:SHE:D} exist.
\end{enumerate}
\end{thm}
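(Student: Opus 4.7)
The theorem has two independent parts; I focus on Part (1), the blowup claim. The plan is to reduce the SPDE to a one-dimensional semimartingale via spatial averaging, pass to the logarithm $\log v_t$ so that the superlinear drift becomes an additive forcing, and use condition \eqref{ass:2} of Assumption \ref{ass:D} --- which forces $\sigma$ to grow much more slowly than $b$ --- to show that the martingale is subdominant, reducing blowup to the ODE question governed by the finite Osgood integral \eqref{eq:osgood}. First one propagates positivity to largeness: because $v_0$ is continuous, nonnegative and positive on an open interval, and $\sigma$ satisfies $\sigma(0)=0$ together with $\inf_{x\ge \mu}\sigma(x)>0$, a Mueller-type support argument gives, for each $t_0\in(0,\delta)$ and each $M<\infty$, that $\bP(\inf_{x\in[a,1-a]} v_{t_0}(x)\ge M)>0$. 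By the strong Markov property it is then enough to prove that blowup in time $\le \delta-t_0$ occurs with probability bounded away from zero when $v_0\ge M$ on $[a,1-a]$, uniformly as $M\to\infty$.

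Next, choose a smooth test function $\phi$ supported in $[a,1-a]$ with $\int\phi\,dx=1$, and set $Y_t:=\int \phi(x)\log v_t(x)\,dx$, $W_t:=\exp Y_t$. A mollification argument to justify It\^o's formula on $\log v_t$ (valid while $v_t>0$, which Mueller's comparison guarantees) yields
\[
dY_t = \Bigl[\tfrac12\langle \log v_t,\phi''\rangle+\tfrac12\langle (\partial_x\log v_t)^2,\phi\rangle +\langle f(v_t),\phi\rangle-\tfrac12\langle g(v_t)^2,\phi\rangle\Bigr]\,dt+dM_t,
\]
with quadratic variation $d\langle M\rangle_t=\langle g(v_t)^2,\phi^2\rangle\,dt$. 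The gradient-square term is non-negative and can be dropped. Jensen applied to the probability measure $\phi\,dx$ gives $\langle f(v_t),\phi\rangle\gtrsim f(W_t)$, with the potential loss from non-convexity of $f\circ\exp$ absorbed into condition \eqref{ass:1} (regular variation of $f$, which bounds ratios $f(Cx)/f(x)$). Condition \eqref{ass:2} forces $g(v)^2\lesssim f(v)^{1/2-2\eta}$ at infinity, so the It\^o correction, the quadratic variation, and the linear-in-$Y_t$ Laplacian error are all negligible against $f(W_t)\,dt$ once $W_t$ is large. Transforming back, one obtains $dW_t\ge c\,b(W_t)\,dt+d\tilde M_t$ with $d\langle \tilde M\rangle_t\ll b(W_t)^2\,dt$; a Dambis--Dubins--Schwarz time change then turns this into a small Brownian perturbation of $dW/d\tau=1$, and the finiteness of $\int_M^\infty dw/b(w)$ forces finite-time explosion of $W_t$, hence of $v_t$ on $[a,1-a]$, with probability bounded away from zero.

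For Part (2), I would truncate $b$ and $\sigma$ at level $N$ to obtain global solutions $v^{(N)}$ and derive $N$-uniform moment bounds by splitting $v^{(N)}=z^{(N)}+S^{(N)}$, where $S^{(N)}$ is the stochastic convolution of $\sigma_N(v^{(N)})$ against the Dirichlet heat kernel and $z^{(N)}$ solves $\partial_t z=\tfrac12\partial_x^2 z+b_N(z+S)$. Condition \eqref{ass:2} controls the moments of $S^{(N)}$ in terms of $f(v^{(N)})^{1/2-\eta}$, which grows much more slowly than $v^{(N)}$ itself, while $z^{(N)}$ can be compared with the deterministic PDE $\partial_t w=\tfrac12 \partial_x^2 w+b(w+K)$. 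The latter has global solutions when Osgood fails (the condition being stable under bounded shifts of the argument of $b$), so a Gronwall-type bootstrap yields $N$-uniform control, preventing explosion as $N\to\infty$.

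The main obstacle is the derivation and exploitation of the It\^o formula for $\log v_t$ in Part (1). Making It\^o rigorous requires regularization of $v_t$ and a careful passage to the limit, preserving the non-negativity of the gradient-square term and the bound on the martingale's quadratic variation. The accompanying Jensen step is equally delicate: $f\circ\exp$ need not be convex, and condition \eqref{ass:1} of Assumption \ref{ass:D} (quantitative regular variation) is what allows the spatial average of $f(v)$ against $\phi$ to be compared with $f$ at the weighted geometric mean $W_t$ up to a universal constant --- the quantitative input that makes the reduction to the Osgood ODE genuinely work.
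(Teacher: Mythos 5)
Your plan is a genuinely different route from the paper (the paper runs a level-crossing scheme, tracking the time for $\inf_{[a,1-a]}v_t$ to go from $2^n$ to $2^{n+1}$ using a tail bound on the noise term from Lemma 2.2, and a support theorem to seed the process at level $2^{n_0}$), but I do not think the plan as stated closes. Two gaps seem fatal without substantial new ideas.

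First, the Laplacian term $\tfrac12\langle\log v_t,\phi''\rangle$ is not controllable the way you claim. It is not ``linear in $Y_t$'': it is an \emph{unrelated} linear functional of $\log v_t$, with $\phi''$ weighting the edges of $\operatorname{supp}\phi$ with a \emph{positive} sign. Under Dirichlet boundary conditions the solution can be tiny near those edges, so $\log v_t$ there is arbitrarily negative and $\langle\log v_t,\phi''\rangle$ can overwhelm the drift term by an arbitrary amount. Controlling it requires a quantitative \emph{lower} bound on $v_t$ on $\operatorname{supp}\phi$, which is exactly what the theorem asserts. In the paper's scheme this dissipation is handled by working only over the short time $t_n=2^{n+5}/b(2^{n-4})$ during which the deterministic part $p_t*v_0$ (with $v_0\ge2^n$ on $[1/3,2/3]$) has demonstrably lost at most a factor before the drift term $\ge\tfrac1{16}b(2^{n-4})t_n$ kicks in; a global spectral-gap style bound on the Laplacian contribution will not see this time-scale matching. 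Second, even granting the reduction, blowup of $W_t=\exp\langle\log v_t,\phi\rangle$ is a statement about a weighted geometric mean and does \emph{not} imply $\sup_{t\le\delta}\inf_{x\in[a,1-a]}v_t(x)=\infty$: the solution could blow up at a single point of $\operatorname{supp}\phi$ while remaining bounded elsewhere. The paper's argument is built around tracking the infimum itself (the good event $A_n$ guarantees $\inf_{[1/3,2/3]}v(t,\cdot)$ does not fall below $2^{n-4}$ and reaches $2^{n+1}$), so this issue never arises there. A smaller but real issue: the Jensen step needs $f\circ\exp$ convex, and Assumption \ref{ass:D}\eqref{ass:1} only gives ratio bounds $f(Cx)\lesssim f(x)$, which are upper bounds and say nothing about oscillation; ``absorbing the loss from non-convexity'' is asserted, not argued. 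For Part (2) your truncation/moment-bound route is the standard one (essentially Chen--Foondun--Huang--Salins), but the paper uses the same level-crossing idea in reverse, obtaining global existence on $[0,1]$ from $\sum_n t_n=\infty$ in a few lines; the paper's route is considerably simpler on a bounded domain.
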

\begin{rem}The above result is also true for periodic or Neumann boundary conditions. In fact, we could take $a=0$ in Statement \ref{state:1} in that case. One could also consider equation \eqref{eq:SHE:D} on an interval $[0,J]$ (instead of $[0,1]$) with Dirichlet, periodic or Neumann boundary conditions, and obtain similar results; for the Dirichlet case we would then replace $\inf_{[a,1-a]}$ by $\inf_K$ for a compact subinterval $K$ of $(0,J)$.
\end{rem}
\begin{rem}[\bf Important] \label{rem:2} Drifts $b$ that satisfy Assumption \ref{ass:D}(\ref{ass:3}) can not grow much faster than linearly. Examples include (see Lemma \ref{lem:b:ass})
\begin{align*}
b(x)&= x(\log x)(\log \log x)\cdots (\underbrace{\log\log...\log}_{n \text{ times}} x)\\
b(x)&=x(\log x)(\log \log x)\cdots (\underbrace{\log\log...\log}_{(n-1) \text{ times}} x)(\underbrace{\log\log...\log}_{n \text{ times}} x)^{1+\eta}\qquad (\eta>0);\end{align*}
 the second satisfies \eqref{eq:osgood} while the first does not. Assumption \ref{ass:D}(\ref{ass:2}) then also puts a bound on how fast $\sigma$ can grow; again not much faster than linearly. Note that Assumption \ref{ass:D}(\ref{ass:3}) excludes drift functions which grow very fast, e.g. $b(x)=e^{x^2}$.  A very useful result of \cite{geis-mant} states   that if $v$ (resp. $\widetilde v$) solves \eqref{eq:SHE:D} with $(b, \sigma)$ (resp. $(\widetilde b, \sigma)$) and the same initial profile, and if $b\ge \widetilde b$, then $v\ge \widetilde v$ up to the time of {\it blowup} of $v$; we say blowup occurs by time $\tau$ if $\sup_{t\le \tau,\; x\in [0,1]} v(t,x) =\infty$. Therefore we will have blowup by time $\delta$ with positive probability for $(b=e^{x^2}, \sigma= x(\log x)^{\frac12-\eta})$ since we have it for $(b=x(\log x)^2, \sigma= x(\log x)^{\frac12-\eta})$. However we can not conclude \eqref{eq:D:blow} for $b=e^{x^2}$ from our arguments.  Similarly since we have global existence for $(b= x\log x, \sigma=x(\log x)^{\frac14-\eta})$ we will have global existence for $(\widetilde b, \sigma= x(\log x)^{\frac14-\eta})$ for $\widetilde b \le x\log x$. The crucial observation is that it is enough to understand blowup (resp. global existence) for drifts which {\it just} satisfy (resp. {\it just} fail to satisfy) \eqref{eq:osgood}.
\end{rem}
\begin{rem} \label{rem:3} Statement \ref{state:2} of the above theorem is not new, and can be proved similarly to Theorem 1.6 of \cite{chen-foon-huan-sali} which considered the stochastic heat equation on $\R$. However, in the case of a bounded interval $[0,1]$ the proof turns out to be extremely simple as we will see later. Moreover Assumption \ref{ass:D}(\ref{ass:2}) is quite close to their main assumption on $b, \sigma$ (they roughly assume $|g(x)|\lesssim f(x)^{\frac14}$ times some log corrections). On the other hand, statement \ref{state:1} of the above theorem is entirely new. So far, explosion under \eqref{eq:osgood} has only been shown in the case when $\sigma$ is bounded (see \cite{foon-nual}, see Theorem 1.7 of \cite{chen-foon-huan-sali} for explosion on $\R$).  
\end{rem}

We next discuss blowup for the solution $u_t(x),\, t\ge 0, \, x\in \R$ of the following stochastic heat equation 
\be \tag{SHE} \label{eq:SHE}
    \partial_t u_t(x) = \frac{1}{2} \partial_x^2 u_t(x) + b(u_t(x)) + \sigma\left(u_t(x)\right) \dot W(t, x),\qquad t\ge 0, \, x\in \R,
\ee
with bounded nonnegative initial profile $u_0$. 
  We will continue to have the same assumptions (Assumption \ref{ass:D}) on $b$ and $\sigma$. In addition we {\it will} assume that $\sigma$ is {\it globally} Lipschitz and that $b$ satisfies the finite Osgood criterion \eqref{eq:osgood}. 
Due to the fast growth of $b$ it is not immediately clear what we mean by a solution to \eqref{eq:SHE}. For $J>0$, let $u^{(J)}$ be the solution to 
\bes
 \partial_t u^{(J)}_t(x) = \frac{1}{2} \partial_x^2 u^{(J)}_t(x) + b\left(u^{(J)}_t(x)\wedge J\right) + \sigma\left(u^{(J)}_t(x)\right) \dot W(t, x),\qquad t\ge 0, \, x\in \R,
\ees
with initial profile $u^{(J)}_0=u_0$. As $\sigma$ is Lipschitz and $b$ is locally Lipschitz, a random field solution $u^{(J)}$ exists \cite{spde-utah, wals} and is nonnegative \cite{muel}. Moreover by a comparison theorem \cite{geis-mant} we see that $u^{(J_1)}_t(x) \le u^{(J_2)}_t(x)$ whenever $J_1\le J_2$. We define the solution to \eqref{eq:SHE} as 
\bes
u_t(x) = \lim_{J \to \infty}u_t^{(J)}(x).
\ees
See \cite{foon-khos-nual, jose-ovha} for a brief justification for why $u_t(x)$ may be labeled as a `minimal solution' of \eqref{eq:SHE}. We have the following
\begin{thm}\label{thm:line} Let Assumption \ref{ass:D} hold. In addition suppose that $\sigma$ is globally Lipschitz and $b$ satisfies the finite Osgood criterion \eqref{eq:osgood}. Let $u_0$ be a nonnegative continuous function which is positive in an open interval around $0$. Fix any $\delta>0$ and $M>0$.
\begin{enumerate}
\item \label{2:state:1}  Suppose in addition for each $\mu>0$ we have $\inf_{x\ge \mu}\sigma(x)>0$. We have 
\bes
\bP\left(\sup_{t\le \delta} \left(\inf_{x\in [-M,\, M]} u_t(x)\right) =\infty\right)>0.
\ees 
\item \label{2:state:2} Suppose we assume the conditions of the above statement, and let $u_0\equiv 1$. We have instantaneous explosion:
\bes
\bP\left(\text{there is a point } p \in \R \text{ such that } \sup_{t\le \delta} \left(\inf_{x\in [p-M,\, p+M]} u_t(x)\right) =\infty\right)=1.
\ees
\item \label{2:state:3} In the case that $\sigma(u)=\beta u$ for some $\beta>0$ and $u_0\equiv 1$, we have instantaneous everywhere explosion:
\bes
\bP\left( \sup_{t\le \delta} \left(\inf_{x\in\R} u_t(x)\right) =\infty\right)=1.
\ees
\end{enumerate}
\end{thm}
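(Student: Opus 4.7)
The plan is to deduce all three parts of Theorem \ref{thm:line} from Theorem \ref{thm:dirichlet} via the comparison asserted in the abstract, namely that the $\R$-solution of \eqref{eq:SHE} pointwise dominates the Dirichlet solution of \eqref{eq:SHE:D} on any bounded subinterval started from the matching restriction of the initial data. For Part (\ref{2:state:1}), fix $L>M$ and consider the Dirichlet solution $v$ on $[-L,L]$ started from $u_0|_{[-L,L]}$. Since $u_0|_{[-L,L]}$ is positive on an open interval, Theorem \ref{thm:dirichlet}(\ref{state:1}) (in the version for general intervals $[0,J]$ noted in the remarks following Theorem \ref{thm:dirichlet}) applies with compact subinterval $[-M,M]$, yielding $\bP(\sup_{t\le\delta}\inf_{[-M,M]} v_t = \infty)>0$; the comparison transfers this to $u$.

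For Part (\ref{2:state:2}), with $u_0\equiv 1$ both the initial condition and $\dot W$ are translation invariant in $x$, so the joint law of $(u_t(x))_{t,x}$ is shift invariant. The event
\[
A := \bigl\{\exists\, p \in \R : \sup_{t\le\delta}\inf_{x \in [p-M,\,p+M]} u_t(x) = \infty\bigr\}
\]
is shift invariant and contains the Part (\ref{2:state:1}) event (take $p=0$), so $\bP(A)>0$. Ergodicity (in fact mixing) of the translation action on white noise then forces $\bP(A)=1$.

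For Part (\ref{2:state:3}), apply Part (\ref{2:state:2}) with $\delta/2$ in place of $\delta$ and a fixed width $M'>0$: almost surely there exist a random $p\in\R$ and times $\tau_N\le\delta/2$ with $\inf_{[p-M',p+M']} u_{\tau_N}\ge N$ for every $N$. Set $T^*:=\lim_N \tau_N\le\delta/2$, and let $Z^{(s)}_t$ denote the parabolic Anderson model (PAM) $\partial_t Z = \tfrac12\partial_x^2 Z + \beta Z \dot W$ started at time $s$ from initial data $\mathbf{1}_{[p-M',p+M']}$, driven by the same noise $\dot W$. By the Geiss--Manthey comparison \cite{geis-mant} applied from time $\tau_N$ (using $b\ge 0$), $u_t(x)\ge N\cdot Z^{(\tau_N)}_t(x)$ for all $t\ge \tau_N$ and $x\in\R$. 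Fix $t^*\in(\delta/2,\delta)$: joint continuity of the PAM flow in its starting time gives $Z^{(\tau_N)}_{t^*}(x)\to Z^{(T^*)}_{t^*}(x)$ for each $x$, and Mueller's strong positivity theorem applied conditionally on the filtration up to $T^*$ yields $Z^{(T^*)}_{t^*}(x)>0$ simultaneously for all $x\in\R$ on a full-measure event. Hence for each $x$, $u_{t^*}(x)\ge (N/2)\,Z^{(T^*)}_{t^*}(x)\to\infty$ as $N\to\infty$, so $u_{t^*}(x)=\infty$ for all $x$ simultaneously, giving $\inf_{x\in\R} u_{t^*}(x)=\infty$.

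The main obstacle is Part (\ref{2:state:3}): the PAM comparison must be set up across a \emph{random} starting time $\tau_N$ (using the strong Markov property of space-time white noise), and the limiting PAM solution $Z^{(T^*)}_{t^*}$ must be shown to be strictly positive simultaneously in $x$ on a single full-measure event. Both steps rely on well-known properties of PAM in one dimension---joint continuity of its propagator and Mueller-type strict positivity---but require some care to assemble on a single almost-sure event, especially because $T^*$ is random.
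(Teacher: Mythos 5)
Your Part (\ref{2:state:1}) is essentially the paper's proof: compare $u$ to a Dirichlet solution on a bounded interval via Mueller's theorem and Theorem \ref{thm:compare}, then cite Theorem \ref{thm:dirichlet}(\ref{state:1}).

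For Parts (\ref{2:state:2}) and (\ref{2:state:3}) your routes are correct but differ from the paper's. For (\ref{2:state:2}) you invoke ergodicity of the spatial shift of white noise to upgrade $\bP(A)>0$ to $\bP(A)=1$; the paper instead constructs an infinite family of \emph{independent} Dirichlet solutions $v^{(k)}$ on the disjoint intervals $[4kM-2M,\,4kM+2M]$, each dominated by $u$ via Theorem \ref{thm:compare} and each with the same positive blowup probability, and concludes by independence (a second Borel--Cantelli argument). Both are valid; yours is more concise at the cost of importing the 0--1 law for the translation action, while the paper's is self-contained and directly exhibits where the blowup occurs. For (\ref{2:state:3}) the skeleton is the same in both treatments --- compare $u$ after a random time with a parabolic Anderson model (PAM) started from $N\mathbf{1}$ and use Mueller's strict positivity --- but the limit $N\to\infty$ is handled differently. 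The paper exploits the \emph{linearity} of the PAM in its initial datum: if $\widetilde u$ starts from $N\mathbf{1}_{[p-1,p+1]}$ then $C\widetilde u$ solves the same equation from $CN\mathbf{1}$, so the quantile $L(\epsilon,N)$ in $\bP\bigl(\inf_{[\tau,\tau+\delta]}\inf_{[0,1]}\widetilde u\ge L\bigr)=1-\epsilon$ scales exactly as $N\,L(\epsilon,1)$; sending $N\to\infty$ and then $\epsilon\to 0$ finishes. You instead freeze the noise, drive $N\to\infty$ across comparisons, and pass $u_{t^*}\ge N\,Z^{(\tau_N)}_{t^*}$ to the limit using joint continuity of the PAM propagator in its starting time (at a random stopping time $T^*=\lim\tau_N$) together with simultaneous strict positivity of $Z^{(T^*)}_{t^*}$. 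That yields the same conclusion, but the flow-continuity-at-a-random-stopping-time step is not an off-the-shelf lemma and would have to be established --- you rightly flag it as the main obstacle. The scaling identity in the paper is the strictly lighter tool and sidesteps the flow-continuity step entirely. One more shared subtlety: since $u$ may already be infinite near $T^*$, the PAM comparison must first be made against the truncated solutions $u^{(J)}$ and then passed to the limit $J\to\infty$; the paper's phrasing glosses over this as well, but it needs to be kept in mind in either formulation.
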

\begin{rem}\label{rem:4} In the case the drift $b$ grows very fast so that Assumption \ref{ass:D}(\ref{ass:3}) is not satisfied, statements \ref{2:state:1} and \ref{2:state:2} of the above theorem hold with the $\inf$ replaced by a $\sup$. It is not clear to us whether statement \ref{2:state:3} holds without Assumption \ref{ass:D}(\ref{ass:3}). The proof of statement 3 uses the linearity of the solution in the case $\sigma(u)=\beta u$ in a crucial way. We expect the statement to hold also in the case $cu \le \sigma(u)\le Cu$ where $0<c<C<\infty$ but we do not have a proof of this. 
\end{rem}

There have been quite a few results on global existence of solutions of \eqref{eq:SHE:D} when \eqref{eq:osgood} does not hold. It was shown in \cite{dala-khos-zhan} that if $ b(u) =O(\vert u\vert  \log \vert u\vert )$ and $\vert \sigma(u)\vert  = o\left(\vert u\vert  (\log \vert u\vert )^{\frac{1}{4}}\right)$ as $\vert u\vert \to \infty$, and if the initial profile is H\"older continuous, then there exists a global solution of \eqref{eq:SHE:D}. Our ({\it much simpler}) method yields a conclusion that comes close, that is for $b(u) =O(\vert u\vert  \log \vert u\vert )$ we show existence under the assumption $\vert \sigma(u)\vert  = O\left(\vert u\vert  (\log \vert u\vert )^{\frac{1}{4}-\eta}\right)$ for some $\eta>0$; our proof will show that we do not need to assume anything more than boundedness of the initial profile. We are also able to show global existence for drifts which grow faster than $|u|\log |u|$ with appropriate conditions on $\sigma$. We mention here that in the case $b(u) = u\log u$ the condition on $\sigma$ is not optimal; one can even take $\sigma(u)= |u|^{\gamma}(\log |u|)^{\theta}$ with $0\le \gamma<\frac32$ and $\theta\ge 0$ for global existence \cite{salins2025}. There have also been some works on $L^2$ valued solutions to \eqref{eq:SHE:D} with drift $b(u)=O(|u \log |u|)$; see \cite{dala-khos-zhan},  \cite{shan-zhan-2}, \cite{foon-khos-nual-local}. See also \cite{chen-huan}, \cite{shan-zhan} for global existence of \eqref{eq:SHE} with $b(u)=O(|u|\log |u|)$. While most of the above articles were for specific growth of $b$, recently there have been a series of papers (\cite{chen-foon-huan-sali}, \cite{sali-3}, \cite{sali-2}, \cite{sali},\cite{sali-4}, \cite{salins2025}) which show that the infinite Osgood condition implies global existence. As mentioned in Remark \ref{rem:3}, the second statement of Theorem \ref{thm:dirichlet} comes close to the global existence result of \cite{chen-foon-huan-sali}, which has the most relaxed condition on $\sigma$ under which one has global existence; our proof is much simpler for a bounded domain. 

 The first work on blowup of \eqref{eq:SHE:D} was \cite{bond-groi}, where the authors proved blowup for \eqref{eq:SHE:D} in the case when $\sigma$ is a constant and the drift $b$ satisfies \eqref{eq:osgood}. Their method was based on an application of `Feller's explosion test' for SDEs. Subsequently \cite{foon-nual} proved the necessity of finite Osgood condition for blowup in the case when $\sigma$ is constant; their method was based on the classical Osgood's theorem for ODEs adapted for a pathwise analysis.  The paper \cite{chen-foon-huan-sali} shows that \eqref{eq:osgood} is a necessary and sufficient condition for finite time explosion of \eqref{eq:SHE} in the case of {\it bounded} $\sigma$. Theorem \ref{thm:dirichlet} proves the same for any $\sigma$ for which $\left|\sigma(x)/x\right| \lesssim \left(b(x)/x\right)^{\frac14-\eta}$.  In the case of \eqref{eq:SHE} on $\R$ the work \cite{foon-khos-nual} proves instantaneous everywhere blowup under the assumption that $\sigma$ is both bounded away from $0$ and $\infty$, for {\it any} initial profile. Their key step was a proof of the ergodicity of the stochastic integrals of the solution (using Malliavin calculus), and to show that the stochastic integrals can take arbitrarily large values uniformly in a space-time rectangle with positive probability. Their method does {\it not} work for unbounded $\sigma$, in particular the important case $\sigma(u)=u$. Theorem \ref{thm:line} extends the instantaneous explosion result of \cite{foon-khos-nual}, which dealt with bounded $\sigma$, to unbounded $\sigma$. 


Let us describe the key ideas in the proofs of the above theorems. The proof of Theorem \ref{thm:dirichlet} follows a somewhat standard approach in blowup problems in a bounded domain. Namely one considers the evolution of $v$ between levels $2^n, \, n \in \Z$. The key insight in our proof is to control the times it takes to go between the levels. The assumptions in Assumption \ref{ass:D} allow us to control these times; we are able to show that these are of order $\frac{2^n}{b(2^n)}$ with very high probability. This sequence is summable if and only if \eqref{eq:osgood} holds. Theorem \ref{thm:line} follows from Theorem \ref{thm:dirichlet} via a key comparison result (Theorem \ref{thm:compare}) which shows that $u$ remains above $v$, if both of them start with same initial profile $v_0$ supported in $[0,1]$. In a sense what we show is that the blowup of \eqref{eq:SHE} is caused by local blowup, that is blowup of \eqref{eq:SHE:D}. The reader might notice some similarity with our earlier work on blowup for interacting SDEs \cite{jose-ovha}.

In the next Section \ref{sec:dirichlet} we prove Theorem \ref{thm:dirichlet}. Section \ref{sec:comparison} proves a key comparison result which might be of independent interest. Theorem \ref{thm:line} is proved in Section \ref{sec:line}. Finally Section \ref{sec:appendix} collects a couple of lemmas which were used in Section \ref{sec:comparison}.

{\bf Notation:} $a(x) \lesssim b(x)$ means there is a $C>0$ such that $a(x) \le C b(x)$ for all $x$. Constants $C$ might change from line to line. For a random variable $Z$,  $\|Z\|_p$ will denote it's $L^p$ norm. We denote $a\wedge b$ (resp. $a\vee b$) for the minimum (resp. maximum) of two real numbers $a$ and $b$.

\section{Proof of Theorem \ref{thm:dirichlet}}\label{sec:dirichlet}
An important observation is the following
\begin{lem}[Lemma 1.1 in \cite{jose-ovha}] The Osgood condition \eqref{eq:osgood} is equivalent to 
\[ \sum_{n\ge 0} \frac{2^n}{b(2^n)} <\infty.\]
\end{lem}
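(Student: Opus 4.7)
The plan is to use a Cauchy-condensation style comparison between the integral and the dyadic sum, exploiting that $b$ is increasing. Since $b$ is monotone, on each dyadic interval $[2^n, 2^{n+1}]$ we have $b(2^n)\le b(x)\le b(2^{n+1})$, so $1/b(x)$ can be sandwiched by constants. Integrating gives a two-sided bound of $\int_{2^n}^{2^{n+1}} dx/b(x)$ in terms of $2^n/b(2^n)$ and $2^n/b(2^{n+1})$. Summing these bounds over $n$ produces the desired equivalence.

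Concretely, first I would note the upper bound
\[ \int_{2^n}^{2^{n+1}} \frac{dx}{b(x)} \le \frac{2^{n+1}-2^n}{b(2^n)} = \frac{2^n}{b(2^n)}, \]
and the lower bound
\[ \int_{2^n}^{2^{n+1}} \frac{dx}{b(x)} \ge \frac{2^n}{b(2^{n+1})} = \frac{1}{2}\cdot \frac{2^{n+1}}{b(2^{n+1})}. \]
Summing over $n\ge 0$ then yields
\[ \frac{1}{2}\sum_{n\ge 1} \frac{2^n}{b(2^n)} \le \int_1^\infty \frac{dx}{b(x)} \le \sum_{n\ge 0} \frac{2^n}{b(2^n)}. \]
The sum $\sum_{n\ge 0} 2^n/b(2^n)$ is finite iff $\sum_{n\ge 1} 2^n/b(2^n)$ is (the $n=0$ term is just a constant $1/b(1)$), so both inequalities together give the stated equivalence.

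There is essentially no obstacle here: monotonicity of $b$ (part of the running assumptions) is all that is needed, and positivity of $b$ on $[1,\infty)$ is ensured because $b$ is nonnegative and increasing with the Osgood integral/sum being well-defined (if $b$ vanishes on an initial segment one can simply start the integral/sum from the first point where $b>0$, which does not affect convergence). Since this is exactly the statement of Lemma~1.1 in \cite{jose-ovha}, I would at most give the three-line argument above and cite that reference for the details.
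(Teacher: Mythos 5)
The paper does not prove this lemma; it is quoted directly from Lemma~1.1 of \cite{jose-ovha}, so there is no internal proof to compare against. Your Cauchy-condensation argument is correct and is the natural route: monotonicity of $b$ gives the two-sided dyadic bounds, and summing over $n$ sandwiches $\int_1^\infty dx/b(x)$ between $\tfrac12\sum_{n\ge1}2^n/b(2^n)$ and $\sum_{n\ge0}2^n/b(2^n)$, which yields the stated equivalence. Your caveat about $b$ possibly vanishing near $1$ is in fact moot under the paper's hypotheses: $b$ is locally Lipschitz, so if $b(1)=0$ then $b(x)\lesssim(x-1)$ near $1$ and the integral already diverges there, while the $n=0$ term $1/b(1)$ makes the sum diverge as well — both sides blow up together, so the equivalence survives without adjusting the starting point.
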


We denote by $p_t(x,y), \, t>0,\, x, y\in [0,1]$ the Dirichlet heat kernel on $[0,1]$. The solution to \eqref{eq:SHE:D} satisfies
\[ v_t(x) = \left(p_t*u_0\right)(x) + \int_0^t \int_0^1 p_{t-s}(x,y) \, b\left(v_s(y)\right)\, dy \, ds + \int_0^t \int_0^1 p_{t-s}(x,y) \,\sigma(v_s(y)) \,W(dy ds).\]
Suppose for the moment $ v_0(x)= 2^n$ for $x\in[\frac13,\,\frac23]$ and $v_0(x) \le 2^n$ for all $x$. Consider the process $v^{(n)}$  defined as follows:  $v^{(n)}_0= v_0$ and
\begin{equation}\label{eq:v:2} \begin{split} v^{(n)}_t(x) &=\left(p_t*v_0\right)(x) + \int_0^t \int_0^1 p_{t-s}(x,y)  b\left(v^{(n)}_s(y)\right) dy ds \\
&\qquad\qquad  + \int_0^t \int_0^1 p_{t-s}(x,y) \,\sigma\left(v^{(n)}_s(y)\wedge \mathbf{C_1}2^{n+1}\right) W(dy ds)\\
&=:\left(p_t*v_0\right)(x)+ D_n(t,x) + N_n(t,x). \end{split}\end{equation}  Let
\[ \tau:= \inf\left\{t\ge 0: \sup_x v^{(n)}_t(x) =\mathbf C_12^{n+1}\right\}.\]
Clearly until time $\tau$ we have $v_t(x)=v^{(n)}_t(x)$.
We need the following
\begin{lem}[Lemma 3.4 in \cite{athr-jose-muel}]\label{lem:N:tail:2} There exist universal constants $\K_1, \K_2$ such that  for all $\alpha, \lambda>0$ and $0\le c\le 1,\, 0<\epsilon<1$,
\[ \bP \left(\sup_{\substack{0\le t\le \alpha\epsilon^4 \\x\in [c,1\wedge (c+ \epsilon^2)]}}|N_n(t,x)| > \lambda\epsilon\right)  \le \frac{\K_1}{1\wedge \sqrt\alpha}\exp\left(-  \frac{\K_2\lambda^2}{\left[\sup_{x\le C_12^{n+1}}\sigma(x)\right]^2\sqrt\alpha}\right).\]
\end{lem}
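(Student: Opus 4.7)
My plan is to prove the estimate by the standard three-step route for stochastic convolutions: (a) bound the $L^p$ norms of $N_n(t,x)$ and its space-time increments via Burkholder-Davis-Gundy (BDG), (b) transfer this to an $L^p$ bound on the supremum over the rectangle by a Garsia-Rodemich-Rumsey (GRR) / Kolmogorov-Chentsov argument, and (c) convert the moment control into a Gaussian tail by optimizing Markov's inequality in $p$.

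For (a), the integrand $\sigma(v_s^{(n)}(y)\wedge \mathbf{D}_1 2^{n+5})$ is bounded in absolute value by $\sigma(\mathbf{D}_1 2^{n+5})$ (read as $\sup_{[0,\mathbf{D}_1 2^{n+5}]}|\sigma|$ if $\sigma$ is not monotone, which is finite by local Lipschitzness and $\sigma(0)=0$). By BDG, for every even $p\ge 2$,
\[\|N_n(t,x)\|_p \le C\sqrt{p}\,\sigma(\mathbf{D}_1 2^{n+5})\Bigl(\int_0^t\!\!\int_0^1 p_{t-s}(x,y)^2\,dy\,ds\Bigr)^{1/2} \le C\sqrt{p}\,\sigma(\mathbf{D}_1 2^{n+5})\,t^{1/4},\]
using $\int_0^1 p_s(x,y)^2\,dy \lesssim s^{-1/2}$ for the Dirichlet heat kernel on $[0,1]$. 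Splitting $N_n(t,x)-N_n(t',x')$ into the common-time piece and the remainder, and applying the analogous heat-kernel increment bounds, yields the parabolic H\"older estimate
\[\|N_n(t,x)-N_n(t',x')\|_p \le C\sqrt{p}\,\sigma(\mathbf{D}_1 2^{n+5})\bigl(|t-t'|^{1/4}+|x-x'|^{1/2}\bigr).\]

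For (b)-(c), I rescale parabolically: set $\tilde N(u,z):=\epsilon^{-1}N_n(\alpha\epsilon^4 u,\,c+\epsilon^2 z)$ on $(u,z)\in[0,1]\times[0,A_\epsilon]$ with $A_\epsilon=1\wedge(1-c)/\epsilon^2$. The estimates above transform to $\|\tilde N(u,z)\|_p\le C\sqrt{p}\,\sigma(\mathbf{D}_1 2^{n+5})\alpha^{1/4}$ together with the corresponding modulus $C\sqrt{p}\,\sigma(\mathbf{D}_1 2^{n+5})\alpha^{1/4}\bigl(|u-u'|^{1/4}+|z-z'|^{1/2}\bigr)$. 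GRR with these H\"older exponents gives $\bigl\|\sup_{(u,z)}|\tilde N(u,z)|\bigr\|_p \lesssim \sqrt{p}\,\sigma(\mathbf{D}_1 2^{n+5})\alpha^{1/4}$ on a unit parabolic box, and Markov with the optimal choice $p\sim \lambda^2/(\sigma(\mathbf{D}_1 2^{n+5})^2\sqrt\alpha)$ yields the exponential factor $\exp(-c\lambda^2/(\sigma(\mathbf{D}_1 2^{n+5})^2\sqrt\alpha))$; undoing the rescaling restores the original rectangle and produces the bound on $\bP(\sup|N_n|>\lambda\epsilon)$.

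The step I expect to be the main obstacle is pinning down the prefactor in the precise form $(1\wedge\sqrt\alpha)^{-1}$. A naive union bound over the covering sub-boxes in the regime $\alpha>1$ would introduce a prefactor growing polynomially with $\alpha$; obtaining the clean $(1\wedge\sqrt\alpha)^{-1}$ instead requires chaining across time using Doob's maximal inequality rather than a bare union bound, and treating the two regimes $\alpha\le 1$ and $\alpha>1$ separately, with the short time window in the former case fed directly into the variance. The moment estimates in (a) are routine; it is this careful parabolic-scaling book-keeping that dictates the final pre-exponential factor, and this is exactly the accounting carried out in Lemma 3.4 of \cite{athr-jose-muel}.
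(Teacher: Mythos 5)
The paper does not prove this lemma; it is cited verbatim from Lemma~3.4 of \cite{athr-jose-muel} together with the remark that the two ingredients there (their Lemmas~3.1 and~3.3: BDG moment/H\"older increment bounds on the noise term, and a chaining argument converting these into Gaussian tails) carry over from periodic to Dirichlet boundary conditions. Your overall strategy --- BDG moments with an explicit $\sqrt p$, a parabolic Kolmogorov/GRR chaining, and optimization over $p$ --- is therefore essentially the route taken in the cited reference, and steps (a) and (c) are fine.

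There is, however, a concrete gap in the rescaling bookkeeping, and it lives in the regime $\alpha<1$, not $\alpha>1$ as you suggest. Under your substitution $t=\alpha\epsilon^4 u$, $x=c+\epsilon^2 z$, the increment estimate transforms to
\[
\bigl\|\tilde N(u,z)-\tilde N(u',z')\bigr\|_p \;\le\; C\sqrt p\,\sigma\bigl(\mathbf D_1 2^{n+5}\bigr)\Bigl(\alpha^{1/4}|u-u'|^{1/4}\;+\;|z-z'|^{1/2}\Bigr),
\]
because $|x-x'|^{1/2}=\epsilon|z-z'|^{1/2}$: the spatial term does \emph{not} acquire the $\alpha^{1/4}$ factor you wrote. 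When $\alpha\ge 1$ you may bound the coefficient of $|z-z'|^{1/2}$ by $\alpha^{1/4}$, your stated modulus is then a valid upper bound, and since the rescaled spatial extent is $A_\epsilon\le 1$ a single unit box suffices --- no Doob maximal inequality and no covering are needed; GRR plus Markov directly give the prefactor $\K_1$ and the exponent $\exp(-\K_2\lambda^2/(\sigma^2\sqrt\alpha))$. When $\alpha<1$, however, the GRR bound over the unit box is controlled by $\sqrt p\,\sigma\max(\alpha^{1/4},1)=\sqrt p\,\sigma$, so Markov would only yield $\exp(-c\lambda^2/\sigma^2)$, losing the $\sqrt\alpha$ in the exponent. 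The correct repair is a genuinely parabolic tiling: partition the spatial interval $[c,1\wedge(c+\epsilon^2)]$ into at most $\lceil 1/\sqrt\alpha\rceil$ subintervals of length $\sqrt\alpha\,\epsilon^2$, so that each sub-rectangle has parabolic aspect ratio $\alpha\epsilon^4\times\sqrt\alpha\,\epsilon^2$. Rescaling each such box to $[0,1]^2$ makes both moduli carry the $\alpha^{1/4}$ factor, GRR plus Markov give $\exp(-\K_2\lambda^2/(\sigma^2\sqrt\alpha))$ on each, and the union bound over the $O(1/\sqrt\alpha)$ sub-boxes produces exactly the prefactor $\K_1(1\wedge\sqrt\alpha)^{-1}$.
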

While \cite{athr-jose-muel} work with periodic boundary conditions, it is easy to see that Lemmas 3.1 and 3.3 in \cite{athr-jose-muel} needed for the proof of the above lemma continue to hold with Dirichlet boundary conditions. 

 \begin{proof}[Proof of Theorem \ref{thm:dirichlet} (1)] 
 For ease of exposition let us take $a=\frac13$. It will be clear that a similar argument works for any $0<a<\frac12$. Let 
\[ t_n := \frac{2^{n+6}}{b(2^{n-4})}.\]
From the above lemma we obtain for all large $n$
\begin{align*}
\bP\left(\sup_{\substack{x\in[0,1]\\ t\le t_n}}|N_n(t,x)|> 2^{n-5}\right) &\le \frac{1}{\sqrt t_n} \bP\left(\sup_{\substack{x\in[0,\sqrt t_n]\\ t\le t_n}}|N_n(t,x)|> 2^{n-5}\left(\frac{b(2^{n-4})}{2^{n+6}}\right)^{\frac14} t_n^{\frac14}\right) \\
&\le  \K_1\frac{\sqrt{b(2^{n-4})}}{\sqrt{2^{n+6}}}\exp\left(-\frac{\K_22^{2n-10} \sqrt{b(2^{n-4})}}{\left[\sup_{x\le \mathbf{C}_12^{n+1}}\sigma(x)\right]^2\sqrt{2^{n+6}}}\right) \\
&\le \frac{\K_1}{32}\sqrt{f(2^{n-4})}\exp\left(-\frac{\K_2 \sqrt{f(2^{n-4})}}{2^{17} \mathbf C_1^2g( \mathbf C_12^{n+1})^2}\right),
\end{align*}
where we use (\ref{ass:1}) of Assumption \ref{ass:D}. From (\ref{ass:2}) of Assumption \ref{ass:D} we obtain for a large choice of $k$
\begin{align*}
\bP\left(\sup_{\substack{x\in[0,1]\\ t\le t_n}}|N_n(t,x)|> 2^{n-5}\right) &\lesssim \sqrt{f(2^{n-4})} \left(\frac{\sqrt{f(2^{n-4})}}{g(2^{n-4})^2}\right)^{-k} \\
&\le \left(\frac{1}{f(2^{n-4})}\right)^{k\eta} \\
&\le \frac{1}{f(2^{n-4})}.
\end{align*}
For the probability bound to be less than $1$ we need $n\ge n_0$. Note that the sum of the first two terms in \eqref{eq:v:2} is at least $2^{n-3}$ for {\it small} $t$ and $x\in [\frac13, \frac23]$. On the event
\[ A_n =\left\{\sup_{\substack{x\in[0,1]\\ t\le t_n}}|N_n(t,x)|\le 2^{n-5}\right\},\]
 we have 
\[ v^{(n)}(t,x)>2^{n-3}- 2^{n-5}\ge 2^{n-4}  , \qquad x\in \left[\frac13, \frac23\right],\; t\le t_n .\]
Let $M_t^{(n)}:=\sup_x v_t^{(n)}(x)$. On the event $A_n$ it is easily seen 
\begin{align*} M_t^{(n)}&\le 2^n + \int_0^t b\left(M_s^{(n)}\right) ds + 2^{n-5} \\
&\le 2^{n+1} + \int_0^t b\left(M_s^{(n)}\right) ds .
\end{align*}
 Therefore on the event $A_n$ we have $M_t^{(n)}\le z_t$ where $z_t$ satisfies the ODE
\be \label{eq:ode} dz_t=b(z_t) dt\ee
with $z_0=2^{n+1}$. Denote by $T_n$ the time for $z_t$ to reach $\mathbf C_12^{n+1}$.  We have by (\ref{ass:3}) of Assumption \ref{ass:D}
\bes
T_n= \int_{2^{n+1}}^{\mathbf C_12^{n+1}} \frac{dz}{b(z)} > \frac{32 \times 2^{n+1}}{b(2^{n-4})}= t_n
\ees
 for all large $n$. Consequently on the event $A_n$ we have 
\[ \sup_{x\in [0,1],\, t\in [0,t_n]} v^{(n)}(t,x) \le \mathbf C_12^{n+1},\]
and hence $v$ coincides with $v^{(n)}$ on this event. Moreover for any fixed time $t\le t_n$ there is a probability of at least $\frac{1}{8}$ that a Brownian motion killed at $\{0,1\}$ and started at a point $x\in [\frac13, \frac23]$ stays within the interval $[\frac13, \frac23]$ at time $t$. Therefore on the event $A_n$
\[\inf_{x\in [\frac13,\frac23]} v(t_n, x) \ge 0+\frac{b(2^{n-4})}{8}t_n -2^{n-5}\ge 2^{n+1}.\]
 To summarize, there is a constant $c>0$ such that with probability at least $1- \frac{c}{f(2^{n-4})}$ the following holds: starting with $v_0(x)= 2^n$ for $x\in [\frac13, \frac23]$ and $v_0(x) \le 2^n$ for $x\in [0,1]$, the process $\inf_{x\in [\frac13, \frac23]}v(t,x)$ does not hit level $2^{n-4}$ up to time $t_n$ and crosses level $2^{n+1}$ at time $t_n$. The above is valid as long as $n\ge n_0$ for some large enough $n_0$. We now apply the Markov property to show that $\inf_{x\in [\frac13, \frac23]}v(t,x)$ blows-up by time $\sum_{n\ge n_0} t_n$ with probability at least $\prod_{n=n_0}^{\infty} \left(1- \frac{c}{f(2^{n-4})}\right)$ if the process starts with the initial profile $v_0(x)\equiv 2^{n_0}\mathbf{1}_{x\in [\frac13, \frac23]}$. 

To prove \eqref{eq:D:blow} it remains to show that for any $\delta>0$
\[ \bP\left(\inf_{x\in [\frac13, \frac23]} v(\delta,x)\ge 2^{n_0}\right) >0.\]
This follows by an application of Theorem 2.1 of \cite{ball-mill-sole}. Although the theorem assumes Neumann boundary conditions for \eqref{eq:SHE:D} it is also valid for Dirichlet boundary conditions as mentioned in the introduction of \cite{ball-mill-sole}. Indeed one can choose $h(t,x)$  of equation (0.4) of \cite{ball-mill-sole} to be such that $\partial^2_{t,x} h=C\frac{2^{n_0}}{\delta}$ for some large enough $C>0$. 
 This proves \eqref{eq:D:blow}.
\end{proof}

\begin{proof}[Proof of Theorem \ref{thm:dirichlet} (2)] 

Next we show global existence with probability $1$ if \eqref{eq:osgood} fails to hold. Thanks to \eqref{eq:compare:P:D} we will just show global existence of \eqref{eq:SHE:D} with {\it periodic} boundary conditions. Suppose $v_0\equiv 2^n$. Let ${\mathbf C_1} 2^{n+1} \le 2^{n+k_0}$ for some $k_0\in \N$ and all $n\in \N$. 
We use instead the bound valid for $n\ge n_1$
\[ \bP\left(\sup_{\substack{x\in[0,1]\\ t\le t_n}}|N_n(t,x)|> 2^{n-5}\right)\le \left[\frac{1}{f(2^{n-4})}\right]^2.\]
We might as well assume that $b(x)\ge Cx(\log x)$ for large $x$;  if $b$ grows slower than $Cx\log x$, global existence with drift $Cx\log x$ would then necessarily imply global existence for the smaller drift.  Therefore for $n\ge n_1$
\[ \bP\left(\sup_{\substack{x\in[0,1]\\ t\le t_n}}|N_n(t,x)|> 2^{n-5}\right)\lesssim \frac{1}{n^2}.\]
What we have shown in the proof of Theorem \ref{thm:dirichlet} (1) is that if the initial profile is $2^n$ at time $0$ then by time $t_n$ it is less than $2^{n+k_0}$ everywhere with probability at least $1-\frac{C}{n^2}$. Note that 
\[ \prod_{i=0}^{\infty}\bP\left(\sup_{\substack{x\in[0,1]\\ t\le t_{n_1+ik_0}}}|N_{n_1+ik_0}(t,x)|\le 2^{n_1+ik_0-5}\right) \ge \prod_{i=0}^{\infty}\left[1-\frac{C}{(n_1+ik_0)^2}\right]\]
The expression on the right can be made as close to $1$ as we need by choosing $n_1$ large enough. Moreover $\sum_{i=0}^{\infty} t_{n_1+ik_0}=\infty$ for any $n_1$. What we have shown is that if the initial profile is $v_0\equiv 2^{n_1}$ then the probability of global existence goes to $1$ as $n_1\to \infty$. This would of course show global existence for any fixed $v_0$. This completes the proof of Theorem \ref{thm:dirichlet}.
\end{proof}

\section{Comparison of SHE on $[0,1]$ to SHE on $\R$}\label{sec:comparison}
This section proves a key comparison result which allows us to derive Theorem \ref{thm:line} from Theorem \ref{thm:dirichlet}. Throughout this section {\it we will assume} that $b:\R\to \R$ and $\sigma:\R\to \R$ are Lipschitz continuous, with $b$ nonnegative and $\sigma(0)=0$. We start with an initial profile $u_0$ which is continuous with support on $[0,1]$. We will consider the solution $u$ to \eqref{eq:SHE} and the solution $v$ to \eqref{eq:SHE:D} with same initial profile $u_0=v_0$. The white noise is the same for both equations. To be precise, the white noise for \eqref{eq:SHE:D} is the restriction of the white noise in \eqref{eq:SHE} to the spatial domain $[0,1]$. We have the following
\begin{thm}\label{thm:compare} With the above assumptions we have 
\be\label{eq:compare}\bP\Big(u_t(x) \ge v_t(x) \text{ for all } t\ge 0, \, x\in [0,1]\Big)=1.\ee
Let $v^{(B)}$ be the solution to \eqref{eq:SHE:D} but with \textnormal{periodic} or \textnormal{Neumann} boundary conditions, and same initial profile $v_0$. Then
\be
\label{eq:compare:P:D}\bP\Big(v^{(B)}_t(x) \ge v_t(x) \text{ for all } t\ge 0, \, x\in [0,1]\Big)=1.
\ee
\end{thm}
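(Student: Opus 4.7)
The guiding principle is that Dirichlet boundary conditions kill mass at $\{0,1\}$ while the whole-line and periodic/Neumann dynamics do not; morally, $u$ (resp.\ $v^{(B)}$) receives extra heat near the boundary relative to $v$, and both claims reduce to a stochastic parabolic maximum principle applied to the difference on the common interval $[0,1]$.

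For \eqref{eq:compare}, I would first represent $u|_{[0,1]}$ via Duhamel's formula using the Dirichlet heat kernel $p^D$ on $[0,1]$, together with a boundary term arising from integration by parts against $p^D$:
\begin{equation*}
u_t(x) = (p^D_t * u_0)(x) + \int_0^t\!\!\int_0^1 p^D_{t-s}(x,y)\, b(u_s(y))\, dy\, ds + \int_0^t\!\!\int_0^1 p^D_{t-s}(x,y)\, \sigma(u_s(y))\, W(ds,dy) + B_t(x),
\end{equation*}
where $B_t(x) = \tfrac{1}{2}\int_0^t \partial_y p^D_{t-s}(x,0)\, u_s(0)\, ds - \tfrac{1}{2}\int_0^t \partial_y p^D_{t-s}(x,1)\, u_s(1)\, ds$. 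Since $\partial_y p^D_{t-s}(x,0) \ge 0 \ge \partial_y p^D_{t-s}(x,1)$ (inward-normal positivity of the Dirichlet-kernel gradient) and $u_s(0), u_s(1) \ge 0$ by Mueller's positivity, the boundary contribution $B_t(x)$ is non-negative. Subtracting the mild formula for $v$ then yields a linear mild equation for $w := u - v$ on $[0,1]$, driven by $p^D$, with zero initial data, non-negative source $B$, and coefficients $c_s(y) := (b(u_s)-b(v_s))/w_s$ and $d_s(y) := (\sigma(u_s)-\sigma(v_s))/w_s$ (interpreted to equal the Lipschitz constants where $w_s = 0$) multiplying $w$ in the drift and in the noise, respectively.

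Non-negativity $w \ge 0$ then follows from a Gr\"onwall-type estimate on $\E\int_0^1 (w_t(x)^-)^2\, dx$: the source $B$ gives no negative contribution, the Laplacian implicit in $p^D$ dissipates, and the drift and It\^o-correction terms are each bounded by a constant depending on the Lipschitz norms of $b$ and $\sigma$ times the same $L^2$ quantity. The proof of \eqref{eq:compare:P:D} is entirely parallel with $v^{(B)}$ in place of $u$: Mueller's positivity forces the boundary values $v^{(B)}(t,0), v^{(B)}(t,1)$ to be non-negative in both the periodic and the Neumann cases, so the analogous source term is again non-negative, and the same linear-SPDE argument applies.

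The main technical obstacle is the rigorous execution of both the boundary Duhamel identity and the It\^o--Gr\"onwall step, since the SPDE solutions are only jointly H\"older. The standard remedy is to spatially mollify the mild formulation, apply It\^o's formula to a smooth convex approximation of $x \mapsto (x^-)^2$, and pass to the regularization limit; I would expect the uniform estimates required for this limit procedure, together with control of the normal-derivative boundary integrals involving $\partial_y p^D_{t-s}(x,\cdot)$ at $0$ and $1$, to be the content of the lemmas collected in Section \ref{sec:appendix}.
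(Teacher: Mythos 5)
Your approach is sound and would likely succeed, but it is genuinely different from the paper's. The paper establishes the comparison by discretizing in space: it first proves the analogous comparison for interacting SDEs on $\Z$ versus $\{0,\dots,L\}$ (Proposition~\ref{prop:u:ut}) via the ``Alternating Process'' device, which reduces the ordering to the elementary pointwise domination of transition kernels $G^{(D)}\le G$ together with pathwise comparison of independent one-dimensional SDEs, and then passes to the continuum via the approximation results of Proposition~\ref{prop:U:u:prob} and Gy\"ongy's lattice-approximation theorem. Your route instead restricts $u$ directly to $[0,1]$, writes a Dirichlet Green's representation with a nonnegative boundary source (correctly exploiting $\partial_y p^D_{t-s}(x,0)\ge 0\ge \partial_y p^D_{t-s}(x,1)$ together with Mueller's positivity), and then invokes a stochastic parabolic maximum principle via an It\^o--Gr\"onwall estimate on $\E\int_0^1(w_t^-)^2\,dx$ for $w=u-v$. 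This is conceptually more direct and avoids discretization altogether, but the rigor is heavier: the boundary Duhamel identity must be justified for a random field that is only H\"older continuous in $x$, and the It\^o step for $\int_0^1\phi_n(w_t)\,dx$ requires careful spatial mollification since $\partial_x w$ does not exist pointwise. The paper's discretization sidesteps both regularity obstacles entirely --- the kernel domination $G^{(D)}\le G$ is trivial at the discrete level --- and reuses approximation machinery that is needed anyway elsewhere in the paper. One small misattribution in your sketch: Lemmas~\ref{lem:Pe} and~\ref{lem:UeWe:mombd} in Section~\ref{sec:appendix} are not about control of the boundary-normal-derivative integrals $\partial_y p^D_{t-s}(x,\cdot)$ as you guessed; they are $L^2$ bounds for the lattice heat kernel and uniform moment bounds for the lattice SDE approximations, used in the proof of Proposition~\ref{prop:approx}.
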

We will prove \eqref{eq:compare}. It will be clear that the same line of argument can be used to obtain \eqref{eq:compare:P:D}. To prove \eqref{eq:compare} we first obtain a similar result for a system of interacting SDEs on the one dimensional lattice. Let $U$ be the solution to 
\be\tag{ISDE}
\label{eq:ISDE}
dU_t(x) =(\mathscr{L}U_t)(x) dt + b(U_t(x)) dt + \sigma(U_t(x)) dB_t(x),\quad x\in \Z,
\ee
with $\mathscr{L}$ being the generator of a rate-one continuous-time simple symmetric random walk $X$, and $B_t(x),\, x\in \Z$ a sequence of independent standard Brownian motions. The initial profile $U_0(x),\, x\in\Z$ is nonnegative.
Fix $L\ge 3$. Assume that the initial profile $U_0$ is $0$ outside of $[1, L-1]$ for a fixed $L\ge 3$. We also consider $V$, the solution to
\be\tag{ISDE:D}
\label{eq:ISDE:D}
dV_t(x) =(\mathscr{L}\,V_t)(x) dt + b(V_t(x)) dt + \sigma(V_t(x)) dB_t(x),\quad x\in \{0,1\cdots, L\},
\ee
with {\it Dirichlet} boundary conditions $V_t(0) =V_t(L)=0$, and initial profile $V_0(x)=U_0(x),\, x\in \{0,1,\cdots, L\}$. Denote by $G^{(D)}$ the Dirichlet heat kernel:
\be \label{eq:G:D} G^{(D)}(t; x, y) = P_x\Big(X_t=y, \, X \text{ does not hit } \{0, L\}\text{ till time } t\Big).\ee
In particular $G^{(D)}(t; x, y)=0$ if $x\in [1, L-1]^c$ or $y\in[1, L-1]^c$.

 Our assumptions gaurantee the existence and uniqueness of solutions to \eqref{eq:ISDE} and \eqref{eq:ISDE:D}. Moreover the solutions remain nonnegative, and $U_t(x)$ and $V_t(x)$ are continuous in $t$ for all $x\in \{0,1,\cdots, L\}$. We have
\begin{prop}\label{prop:u:ut} We have 
\[ \bP\Big(U_t(x) \ge V_t(x) \text{ for all } t\ge 0,\; x\in\{0,1,\cdots, L\}\Big) =1. \]
\end{prop}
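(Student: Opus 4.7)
The plan is to analyze the finite-dimensional difference $W_t(x) := U_t(x) - V_t(x)$ on the finite set $\{0,1,\ldots,L\}$. Because $V_t(0)=V_t(L)=0$ by the Dirichlet boundary conditions while $U_t(0),U_t(L)\ge 0$ by nonnegativity of $U$, we have $W_t(x)\ge 0$ automatically at the endpoints, so it suffices to prove $W_t(x)\ge 0$ for $x\in\{1,\ldots,L-1\}$, almost surely, for all $t\ge 0$. The key reformulation is to rewrite the SDE for $U$ at an interior site using the \emph{Dirichlet} Laplacian $\mathscr{L}^{(D)}$ (which acts as if $U$ were $0$ at the two endpoints) plus a nonnegative boundary source:
\begin{equation*}
dU_t(x) = (\mathscr{L}^{(D)} U_t)(x)\,dt + J_t(x)\,dt + b(U_t(x))\,dt + \sigma(U_t(x))\,dB_t(x),
\end{equation*}
where $J_t(1)=\tfrac12 U_t(0)\ge 0$, $J_t(L-1)=\tfrac12 U_t(L)\ge 0$, and $J_t\equiv 0$ otherwise. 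Subtracting the corresponding equation for $V_t$ (same form, but with $J\equiv 0$) yields, for $x\in\{1,\ldots,L-1\}$,
\begin{equation*}
dW_t(x) = (\mathscr{L}^{(D)} W_t)(x)\,dt + J_t(x)\,dt + \bigl(b(U_t(x))-b(V_t(x))\bigr)\,dt + \bigl(\sigma(U_t(x))-\sigma(V_t(x))\bigr)\,dB_t(x),
\end{equation*}
where $\mathscr{L}^{(D)}W_t$ is evaluated with the convention $W_t(0)=W_t(L)=0$.

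I would then apply a Yamada--Watanabe-type analysis of $W_t^-$. Choose $C^2$ convex nonnegative approximants $\phi_n$ with $\phi_n(w)=0$ for $w\ge 0$, $\phi_n(w)\to w^-$ pointwise and monotonically from below, $\phi_n'(w)\in[-1,0]$ and vanishing on $w\ge 0$, and $w^2\phi_n''(w)\le 2/n$. Apply It\^o's formula to $\phi_n(W_t(x))$, sum over the finite index set $x\in\{1,\ldots,L-1\}$, and take expectations (the martingale part is genuine since only finitely many Lipschitz-driven coordinates in $L^2$ are involved). The four resulting contributions are then handled as follows: (i) the source contribution $\phi_n'(W)J_t\le 0$ because $\phi_n'\le 0$ and $J_t\ge 0$, which is favorable and may be discarded; (ii) the Lipschitz bound on $b$ gives $|\phi_n'(W)(b(U)-b(V))|\le K W^-$ pointwise; (iii) the It\^o correction satisfies $\tfrac12\phi_n''(W)(\sigma(U)-\sigma(V))^2\le K^2/n$ by the defining property of $\phi_n$ and Lipschitz continuity of $\sigma$, so its total contribution is $O(L/n)\to 0$; and (iv) the Dirichlet Laplacian term is dissipative via discrete summation by parts using $W_t(0)=W_t(L)=0$ (and $\phi_n'(0)=0$):
\begin{equation*}
\sum_{x=1}^{L-1}\phi_n'(W_t(x))\,(\mathscr{L}^{(D)} W_t)(x) = -\tfrac12 \sum_{x=0}^{L-1}\bigl(\phi_n'(W_t(x+1))-\phi_n'(W_t(x))\bigr)\bigl(W_t(x+1)-W_t(x)\bigr)\le 0,
\end{equation*}
the nonpositivity coming from the monotonicity of $\phi_n'$.

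Combining these estimates and letting $n\to\infty$ by dominated convergence (each $W_t(x)$ is in $L^2$) gives the Gronwall-type bound $\mathbb{E}\sum_x W_t(x)^-\le K\int_0^t\mathbb{E}\sum_x W_s(x)^-\,ds$, forcing $\mathbb{E}\sum_x W_t(x)^-=0$ for every $t\ge 0$ and hence $W_t(x)\ge 0$ almost surely for each fixed $(t,x)$. The almost-sure simultaneous-in-$t$ statement then follows by taking a countable intersection over rational $t$ and invoking the continuity in $t$ of the finitely many coordinate processes. I expect the main technical obstacle to be the careful bookkeeping of the boundary source $J_t$ arising from restricting the $\mathbb{Z}$-dynamics to $\{0,\ldots,L\}$, together with the verification of the correct dissipative sign in the discrete summation-by-parts step; once these are in place the remainder reduces to the standard Yamada--Watanabe pathway.
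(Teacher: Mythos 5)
Your proof is correct, but it takes a genuinely different route from the paper. The paper establishes Proposition~\ref{prop:u:ut} by a time-splitting (``Alternating Process'') argument: both $U$ and $V$ are approximated by processes $\widetilde U^{(n)},\widetilde V^{(n)}$ that alternate between evolving the finitely-many coordinates by independent one-dimensional SDEs on time steps of length $1/n$ and then applying the discrete heat kernel ($G$ for $U$, $G^{(D)}$ for $V$); the ordering $\widetilde V\le\widetilde U$ is trivial in each phase (pathwise comparison of scalar SDEs with the same noise, then monotonicity of the kernels plus $G^{(D)}\le G$), and Lemmas~\ref{lem:approx:V} and~\ref{lem:approx:U} pass the inequality to the limit. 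Your argument instead isolates the boundary source $J_t\ge 0$ coming from restricting the $\Z$-dynamics to $\{0,\dots,L\}$, rewrites both equations with the common Dirichlet Laplacian $\mathscr{L}^{(D)}$, and runs a Yamada--Watanabe-type It\^o argument on $\sum_x\phi_n\big(W_t(x)\big)$ with $W=U-V$: the source term has the right sign because $\phi_n'\le 0$, the discrete Laplacian is dissipative after summation by parts because $\phi_n'$ is nondecreasing and $\phi_n'$ vanishes at the (nonnegative) boundary values, the $b$-term is controlled by the Lipschitz constant, and the It\^o correction is $O(L/n)$ since $w^2\phi_n''(w)\lesssim 1/n$ and $\sigma$ is Lipschitz; Gronwall then forces $\E\sum_x W_t(x)^-=0$. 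This is a self-contained, one-shot comparison that avoids the approximation lemmas entirely. What the paper's method buys in exchange is modularity: the same Alternating Process skeleton is reused verbatim to prove \eqref{eq:compare:P:D} (Dirichlet dominated by Neumann/periodic, via $G^{(D)}\le G^{(B)}$), and it interfaces cleanly with the lattice-to-continuum approximations $U^{(\ve)}\to u$, $V^{(\ve)}\to v$ used to upgrade the lattice comparison to Theorem~\ref{thm:compare}. Your derivation is also fine without assuming monotonicity of $b$ (only its Lipschitz property enters), consistent with the standing hypotheses of Section~\ref{sec:comparison}.

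One small bookkeeping point worth making explicit in a write-up: in the summation-by-parts identity the symbols $W_t(0),W_t(L)$ must be read with the convention $W_t(0)=W_t(L)=0$ inherent in $\mathscr{L}^{(D)}$, not as the true values $U_t(0),U_t(L)$; both readings give $\phi_n'(W_t(0))=\phi_n'(W_t(L))=0$ since $\phi_n'$ vanishes on $[0,\infty)$, but the boundary increments $W_t(1)-W_t(0)$ and $W_t(L)-W_t(L-1)$ differ between the two conventions, and only the convention $W_t(0)=W_t(L)=0$ makes the stated identity exact. You should also note that the local martingale in the It\^o expansion is a true martingale because $|\phi_n'|\le 1$, $\sigma$ is Lipschitz, and the finitely many coordinates $U_t(x),V_t(x)$, $x\in\{0,\dots,L\}$, have uniformly bounded second moments on compact time intervals (standard for Lipschitz ISDEs with $L^2$ initial data), so that expectations can be taken as claimed.
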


We prove this using the {\it Alternating Process} defined in \cite{jose-ovha}: Consider a sequence of processes $\widetilde V^{(n)}\equiv \widetilde V $(we remove the superscript for ease of notation)  which will be an approximation to $V$. The initial profile $\widetilde V_0 = V_0$. Till time $\frac1n-$ 
\[ d\widetilde V_t(x) = b\left(\widetilde V_t(x)\right) dt + \sigma\left(\widetilde V_t(x)\right) dB_t(x),\,\quad  x\in \{0, \cdots, L\},\, 0\le t< \frac1n. \]
At time $\frac1n$
\[ \widetilde V_{\frac1n}(x) = \sum_{m=0}^{L} G^{(D)}\left(\frac1n; x, m \right) \widetilde V_{\frac1n-}(m).\]
Again from time $\frac1n$ to $\frac2n-$ we have independent SDEs: 
\[ d\widetilde V_t(x) = b\left(\widetilde V_t(x)\right) dt + \sigma\left(\widetilde V_t(x)\right) dB_t(x),\,\quad  x\in \{0, \cdots, L\},\, \frac1n\le t< \frac2n, \]
followed by 
\[\widetilde V_{\frac2n}(x) = \sum_{m=0}^{L} G^{(D)}\left(\frac1n; x, m \right) V_{\frac2n-}(m).\]
The process $\widetilde V$ is thus defined for all time. 
\begin{lem}\label{lem:approx:V}
The process $\left\{\widetilde V_t^{(n)}(x), \, x=0,1,\cdots, L\right\}_{0\le t\le T}$ converges in distribution as $n\to \infty$ to the process $\left\{V_t(x), \, x=0,1,\cdots, L\right\}_{0\le t\le T}$.
\end{lem}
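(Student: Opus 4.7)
The plan is to establish $L^2$ convergence $\widetilde V_t^{(n)}(x) \to V_t(x)$ uniformly in $t \le T$ for each lattice site $x$, and then upgrade to weak convergence of the full trajectories by standard tightness. Since the spatial domain $\{0,1,\ldots,L\}$ is finite-dimensional, this upgrade is routine.

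The key step is to rewrite $\widetilde V^{(n)}$ in semigroup (\emph{mild}) form. Iterating the construction and using the identity $\sum_z G^{(D)}(1/n;x,z) G^{(D)}(k/n;z,y) = G^{(D)}((k+1)/n;x,y)$, one obtains at the switching times $t = k/n$:
\begin{align*}
\widetilde V_{k/n}^{(n)}(x)
&= \sum_y G^{(D)}(k/n; x, y)\, V_0(y)
+ \int_0^{k/n} \sum_y G^{(D)}(\kappa_n(k/n - s); x, y)\, b(\widetilde V_s^{(n)}(y))\, ds \\
&\quad + \int_0^{k/n} \sum_y G^{(D)}(\kappa_n(k/n - s); x, y)\, \sigma(\widetilde V_s^{(n)}(y))\, dB_s(y),
\end{align*}
where $\kappa_n(\tau) := \lceil n\tau \rceil / n$. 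For intermediate times $t \in (k/n, (k+1)/n)$ there is only an additional uncoupled SDE increment of duration at most $1/n$, contributing $O(n^{-1/2})$ in $L^2$ by Lipschitz-linear-growth estimates. This is to be compared with the standard mild form
\[
V_t(x) = \sum_y G^{(D)}(t; x, y) V_0(y) + \int_0^t \sum_y G^{(D)}(t - s; x, y)\, b(V_s(y))\, ds + \int_0^t \sum_y G^{(D)}(t - s; x, y)\, \sigma(V_s(y))\, dB_s(y).
\]

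Setting $e_n(t) := \sup_x \E[|V_t(x) - \widetilde V_t^{(n)}(x)|^2]$ and using Lipschitz continuity of $b, \sigma$, It\^o's isometry, and Cauchy--Schwarz, one derives
\[ e_n(t) \le C \int_0^t e_n(s)\, ds + r_n(t), \]
with $r_n(t) \to 0$ uniformly on $[0,T]$. The remainder $r_n$ collects (i) the kernel-time mismatch $G^{(D)}(t-s) - G^{(D)}(\kappa_n(t-s))$ and (ii) the partial-interval correction from intermediate times. For (i), a uniform-in-$n$ $L^2$ bound on $\widetilde V^{(n)}$ (obtained by Gronwall applied directly to its mild form) together with continuity in time of $G^{(D)}(\cdot; x, y)$ gives the conclusion by dominated convergence. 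Gronwall then yields $\sup_{t\le T} e_n(t) \to 0$. Tightness of $\{\widetilde V^{(n)}\}$ in $C([0,T]; \R^{L+1})$ follows from Kolmogorov's criterion with BDG-based moment bounds on increments, and strong uniqueness for \eqref{eq:ISDE:D} identifies the limit.

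The main obstacle is controlling the kernel-mismatch remainder (i): the stochastic integral involves the product $[G^{(D)}(t-s) - G^{(D)}(\kappa_n(t-s))]\, \sigma(\widetilde V_s^{(n)}(y))$, and It\^o's isometry forces one to integrate the squared kernel difference against $\E[\sigma(\widetilde V_s^{(n)}(y))^2]$. Since $G^{(D)}(\tau; x, y)$ is bounded on $[0,T]$ and continuous in $\tau$, dominated convergence handles this, but the steps must be ordered so as not to rely on $L^2$ convergence of $\sigma(\widetilde V^{(n)})$ before it has been proved; this is why the \emph{a priori} moment bound on $\widetilde V^{(n)}$ is obtained first, by a self-contained Gronwall argument on its mild form.
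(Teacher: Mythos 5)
The paper does not actually supply a proof of this lemma---it says only that the proof ``follows as in Proposition 2.3 of \cite{jose-ovha}'' and leaves it to the reader, so there is no in-paper argument to compare against line by line. That said, your route---rewrite the Alternating Process in mild (semigroup) form at the switching times via the Chapman--Kolmogorov property of $G^{(D)}$, set up a Gronwall inequality for $e_n(t)=\sup_x\E|V_t(x)-\widetilde V_t^{(n)}(x)|^2$, establish a self-contained a priori $L^2$ bound on $\widetilde V^{(n)}$ first so that the kernel-mismatch term can be controlled by dominated convergence, and then upgrade to weak convergence of trajectories---is precisely the natural Trotter-splitting argument, and the estimates you describe do close. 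In particular, the ordering you point out (prove the uniform moment bound before touching the kernel-mismatch stochastic integral) is exactly the right care to take, and your formula for $\widetilde V_{k/n}^{(n)}$ with the rounded kernel $G^{(D)}(\kappa_n(\cdot))$ is correct.

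One place where the write-up is slightly imprecise: you invoke tightness in $C([0,T];\R^{L+1})$ via Kolmogorov's criterion, but $\widetilde V^{(n)}$ is not continuous---it jumps at every switching time $k/n$ when the kernel $G^{(D)}(1/n;\cdot,\cdot)$ is applied. The jump at $k/n$ equals $\sum_y\bigl[G^{(D)}(1/n;x,y)-\delta_{x,y}\bigr]\widetilde V_{k/n-}(y)$, whose $L^2$ norm is $O(1/n)$ by the a priori bound and $1-G^{(D)}(1/n;x,x)=O(1/n)$, so the jumps vanish and the limit is continuous, but one should either (a) argue in Skorokhod space with Aldous's criterion, (b) show the maximal jump goes to $0$ in probability and conclude convergence to a $C([0,T])$-valued limit, or (c) observe that for the application in Proposition \ref{prop:u:ut} the uniform-in-$t$ $L^2$ convergence you already prove is enough (pass to a common a.s.\ convergent subsequence at each rational $(t,x)$, use that $U,V$ are continuous in $t$). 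Any of these patches the gap; the rest of the argument is sound.
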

The proof of this follows as in Proposition 2.3 of \cite{jose-ovha} and is left to the reader. 

Similarly we can define a sequence of Alternating processes $\widetilde U^{(n)}\equiv \widetilde U$ which are approximations of $U$. Denote 
\[ G(t; x, y)=P_x(X_t=y),\quad x, y \in \Z\]
 Till time $\frac1n-$ 
\[ d\widetilde U_t(x) = b\left(\widetilde U_t(x)\right) dt + \sigma\left( \widetilde U_t(x)\right) dB_t(x),\,\quad  x\in \Z,\, 0\le t< \frac1n. \]
At time $\frac1n$
\[  \widetilde U_{\frac1n}(x) = \sum_{m\in \Z} G\left(\frac1n; x, m \right) \widetilde U_{\frac1n-}(m).\]
Again from time $\frac1n$ to $\frac2n-$ we have independent SDEs 
\[ d \widetilde U_t(x) = b\left( \widetilde U_t(x)\right) dt + \sigma\left( \widetilde U_t(x)\right) dB_t(x),\,\quad  x\in \Z,\, \frac1n\le t< \frac2n, \]
followed by 
\[ \widetilde U_{\frac2n}(x) = \sum_{m\in\Z} G\left(\frac1n; x, m \right) \widetilde U_{\frac2n-}(m).\]
The process $\widetilde U$ is thus defined for all time. 
We have
\begin{lem}[Proposition 2.3 in \cite{jose-ovha}]\label{lem:approx:U}
The process $\left\{\widetilde U_t^{(n)}(x), \, x=0,1,\cdots, L\right\}_{0\le t\le T}$ converges in distribution as $n\to \infty$ to the process $\left\{ U_t(x), \, x=0,1,\cdots, L\right\}_{0\le t\le T}$.
\end{lem}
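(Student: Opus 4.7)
The plan is to follow the Trotter-Kato splitting analysis of Proposition 2.3 in \cite{jose-ovha}, adapted verbatim to the lattice system on $\Z$. The alternating process $\widetilde U^{(n)}$ lets the drift-diffusion alone act on each interval $[k/n,(k+1)/n)$ and instantaneously applies the heat semigroup $G(1/n;\cdot,\cdot)$ at every multiple of $1/n$; one must check that this splitting is consistent with the full SPDE in the limit $n\to\infty$. The natural framework is the mild formulation on both sides. For the true process,
\[ U_t(x) = \sum_y G(t;x,y) U_0(y) + \int_0^t \sum_y G(t-s;x,y) b(U_s(y))\, ds + \int_0^t \sum_y G(t-s;x,y) \sigma(U_s(y))\, dB_s(y). \]
For the alternating process, iterating the prescription and using the semigroup property $G(a)G(b)=G(a+b)$ for the continuous-time random walk yields the discretized mild form
\[ \widetilde U^{(n)}_t(x) = \sum_y G(\kappa_n(t);x,y) U_0(y) + \int_0^t \sum_y G(\kappa_n(t-s);x,y) b(\widetilde U^{(n)}_s(y))\, ds + \int_0^t \sum_y G(\kappa_n(t-s);x,y) \sigma(\widetilde U^{(n)}_s(y))\, dB_s(y), \]
where $\kappa_n(r)=\lceil rn\rceil/n$ (or the appropriate tally of $1/n$-increments accumulated by time $r$), which satisfies $|\kappa_n(r)-r|\le 1/n$.

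First I would establish uniform second-moment bounds $\sup_{n\ge 1,\, x\in\Z,\, t\le T} \E \widetilde U^{(n)}_t(x)^2 <\infty$ and the analogous bound for $U$. Because $b$ and $\sigma$ are globally Lipschitz in this section and $\sum_y G(t;x,y)=1$, these bounds follow by a standard Gronwall argument applied to $\sup_x \E \widetilde U^{(n)}_t(x)^2$ using It\^o's isometry on the stochastic convolution. Second, subtracting the two mild forms and using $(a+b+c)^2\le 3(a^2+b^2+c^2)$ produces three terms. The drift and diffusion terms each split into a Lipschitz piece carrying $b(U_s(y))-b(\widetilde U^{(n)}_s(y))$ or the analogous $\sigma$-difference (bounded by $\sup_y \E|U_s(y)-\widetilde U^{(n)}_s(y)|^2$ after Cauchy-Schwarz, It\^o isometry, and the identity $\sum_y G=1$) and a residual piece carrying the kernel discrepancy $G(t-s;x,y)-G(\kappa_n(t-s);x,y)$ applied to $b(\widetilde U^{(n)}_s)$ or $\sigma(\widetilde U^{(n)}_s)$. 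The residual pieces go to zero using the uniform moment bounds together with the continuity estimate $\sum_y |G(r;x,y)-G(r';x,y)|\le C|r-r'|$, which holds for a rate-one continuous-time random walk on $\Z$ by the Kolmogorov forward equation. Gronwall applied to $\sup_{x\in\Z}\E|U_t(x)-\widetilde U^{(n)}_t(x)|^2$ then gives pointwise $L^2$ convergence uniform on $[0,T]$, which is much stronger than the finite-dimensional convergence in distribution required.

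To upgrade from convergence of fixed-time marginals to convergence in distribution on $C([0,T];\R^{L+1})$, I would combine the uniform moment bounds with standard tightness criteria: between the discrete instants $k/n$ the process is an SDE with Lipschitz coefficients, so standard moment estimates on increments are available, and at each $k/n$ the jump $\widetilde U^{(n)}_{k/n}-\widetilde U^{(n)}_{k/n-}$ is a convex combination of nearby values weighted by $G(1/n;x,\cdot)$, which concentrates on $x$ at rate $O(1/n)$. Together these give Kolmogorov-type H\"older estimates uniform in $n$, hence tightness.

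The main obstacle is the kernel discrepancy estimate in Step 2: one needs to sum the bound $|G(t-s;x,y)-G(\kappa_n(t-s);x,y)|$ over $y\in\Z$ and combine it with the at-most-linear growth of $b$ and $\sigma$, in such a way that the spatial sum remains summable. This is exactly the step where the semigroup property $\sum_y G(r;x,y)=1$ and the uniform second-moment bound on $\widetilde U^{(n)}$ are both indispensable, and it is the only substantive point where the argument on $\Z$ differs from the standard finite-dimensional SDE splitting theory.
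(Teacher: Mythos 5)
The paper does not give its own proof of this lemma: it is stated as ``Proposition~2.3 in \cite{jose-ovha}'' and the proof is imported wholesale from that reference (the adjacent Lemma~\ref{lem:approx:V} is likewise proved ``as in Proposition~2.3 of \cite{jose-ovha}''). So there is no internal argument in this paper to compare against. What you have written is a reconstruction of the expected splitting argument, and it is essentially the right outline: mild formulations on both sides, uniform second-moment bounds by Gronwall using $\sum_y G(t;x,y)=1$, subtraction of the two mild forms, Lipschitz control of the $b$- and $\sigma$-differences, kernel-discrepancy control via $\sum_y|G(r;x,y)-G(r';x,y)|\lesssim|r-r'|$ (the Kolmogorov forward equation bound, used elsewhere in this paper in the proof of Proposition~\ref{prop:U:u:prob}), Gronwall again for the $L^2$ error, and a tightness argument. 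Since $b,\sigma$ are globally Lipschitz in this section, all the moment and Gronwall steps close.

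Two points deserve a caveat. First, the discretized mild form you write down is not exactly correct on the last partial block: for $t\in(k/n,(k+1)/n)$ the increment accumulated on $[k/n,t]$ is not yet smoothed by any kernel, whereas your formula with $\kappa_n(t-s)=\lceil n(t-s)\rceil/n$ applies $G(1/n;\cdot,\cdot)$ to it. Iterating the prescription carefully gives a slightly different kernel (a floor-type discretization on the completed blocks plus an identity on the current block), but the discrepancy with $G(t-s;\cdot,\cdot)$ remains $O(1/n)$, so this is a bookkeeping correction and does not damage the argument. Second, $\widetilde U^{(n)}$ jumps at each multiple of $1/n$ (the convex-combination step is a genuine discontinuity), so it does not live in $C([0,T];\R^{L+1})$. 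Your tightness step should be phrased in the Skorokhod space $D([0,T];\R^{L+1})$, observing in addition that the jump sizes are $O(n^{-1/2})$ uniformly (they are $G(1/n;x,\cdot)$-averages of values that are a.s.\ H\"older at scale $1/n$), so the limit is supported on continuous paths and one recovers convergence in the uniform topology. With these repairs your argument is a valid, if somewhat more detailed, version of the cited one, and in fact yields convergence in probability, strictly stronger than what the lemma asserts.
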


Now we show that $\widetilde V_t(x) \le \widetilde U_t(x)$ for all $t\ge 0$ and $x\in \{0,1,\cdots, L\}$. Indeed till time $\frac1n-$ both $\widetilde V$ and $\widetilde U$ are the same. At time $\frac1n$
\begin{align*}
\widetilde V_{\frac1n}(x) &= \sum_{m=0}^{L} G^{(D)}\left(\frac1n; x, m \right) \widetilde V_{\frac1n-}(m) \\
&\le \sum_{m=0}^{L} G^{(D)}\left(\frac1n; x, m \right) \widetilde U_{\frac1n-}(m) \\
&\le  \sum_{m=0}^L G\left(\frac1n; x, m \right) \widetilde U_{\frac1n-}(m)\\
& \le \sum_{m\in \Z} G\left(\frac1n; x, m \right) \widetilde U_{\frac1n-}(m) \\
&=  \widetilde U_{\frac1n}(x).
\end{align*}
In the next stage $[\frac1n, \frac2n-)$, we have $\widetilde V_t(x) \le \widetilde U_t(x)$ simply by the pathwise comparison of independent SDEs. At time $\frac2n$ the above argument can again be applied to show that the ordering is preserved for all time. An application of Lemmas \ref{lem:approx:V} and \ref{lem:approx:U} then proves Proposition \ref{prop:u:ut}. \qed

\subsection{Approximations of Stochastic Heat Equation} The next step in the proof of Theorem \ref{thm:compare} is to approximate $u$ (resp. $v$) by interacting SDEs $U^{(\ve)}$ (resp. $V^{(\ve)}$), to be defined later, on a fine lattice $\ve \Z$. 

Let us first focus on the approximation of $u$. This approximation has been carried out in \cite{jose-khos-muel} and \cite{foon-jose-li} in the absence of a drift term. Here we prove the same result when a drift is included and the initial profile $u_0$ is {\it bounded}. The method closely follows that of \cite{jose-khos-muel} and \cite{foon-jose-li}.

Let 
\[ p_t(x) =\frac{1}{\sqrt{2\pi t}} \exp\left(-\frac{x^2}{2t}\right)\] 
be the Gaussian density. The mild solution of \eqref{eq:SHE} satisfies
\begin{align} \label{u_t:x}
\begin{split}
    u_t(x) &= (p_t * u_0)(x) + \int_0^t\int_\R p_{t-s}(y-x) b(u_s(y)) dyds + \int_0^t\int_\R p_{t-s}(y-x) \sigma(u_s(y)) W(dyds) \\
    &=: (p_t * u_0)(x) + \mathcal D(t,x) + \mathcal N(t,x).
\end{split}
\end{align}

Consider next the interacting diffusions $\left\{U^{(\varepsilon)}_t(x)\right\}_{x\in\varepsilon\Z,\, t\ge0}$ on the lattice $\varepsilon \Z$ which solves 
\begin{multline} \label{eq:U_t:x}
    dU^{(\varepsilon)}_t(x) =\frac{1}{2\ve^2}\sum_{y:|y-x|=\varepsilon}\left[U^{(\ve)}_t(y)-U^{(\ve)}_t(x)\right]\, dt + b\left(U^{(\ve)}_t(x)\right) dt+\frac{\sigma\left(U^{(\ve)}_t(x)\right)}{\sqrt \ve} \,dB^{(\varepsilon)}_t(x),
\end{multline}
driven by independent standard linear Brownian motions
\bes
B^{(\varepsilon)}_t(x) = \frac{1}{\sqrt\ve}\int_0^t\int_{x}^{x+\varepsilon} W(dy ds)
\ees
and the initial profile
\be \label{Ini:p:ISDE:ep}
U^{(\varepsilon)}_0(x):=\frac{1}{\varepsilon}\int_{x}^{x+\varepsilon} u_0(y) dy.
\ee
Let $P_t^{(\varepsilon)}(x)$ denote the transition kernel of $\ve X_{\frac{t}{\ve^2}}$, where $X$ is a rate one continuous time simple symmetric random walk on the lattice $\Z$. Then mild solution of \eqref{eq:U_t:x} satisfies
\be\begin{split} \label{U_t:x}
    U^{(\varepsilon)}_t (x) &= \sum_{y\in \ve\Z}P^{(\varepsilon)}_t(x-y)U_0^{(\ve)}(y) + \sum_{y\in \ve\Z} \int_0^t P^{(\varepsilon)}_{t-s}(x-y)  b\left(U^{(\varepsilon)}_s(y)\right) ds \\
    &\hspace{3cm}+\frac{1}{\sqrt{\ve}}\sum_{y \in \ve \Z} \int_0^t P^{(\varepsilon)}_{t-s}(x-y) \sigma\left(U^{(\varepsilon)}_s(y)\right) dB_s^{(\ve)}(y) \\
    &=: \left(P^{(\varepsilon)}_t * U_0^{(\ve)}\right)(x) + \textnormal{D}_{\varepsilon}(t, x)+ \textnormal{N}_{ \varepsilon}(t, x).
\end{split}\ee

We denote $\mu$ to be the step distribution of $X$, that is $\mu(1)=\mu(-1)=\frac12$. It is easy to see that $\mu$ has the characteristic function $\hat \mu(z) = \cos z$. The characteristic function satisfies Assumptions 1.1 and 1.2 of \cite{foon-jose-li} with $\alpha=2$ and $a=2$:  it is symmetric and $\{z\in (-\pi, \pi]:\hat \mu =1\}=\{0\}$, and
\begin{align*}
\cos z &= 1-\frac12 |z|^2+ O(|z|^{4})\qquad \;\;\text{as } |z|\to 0, \\
\frac{d}{dz} \cos z & = -\frac12 \frac{d}{dz} z^2 + O(|z|^{4-k}) \qquad \text{as } |z|\to 0,
\end{align*}
for all $k \le 4$. We mention here that \cite{foon-jose-li} imposes the restriction that $0<a<1$. However Propositions 3.3 and 3.6 there continue to hold even when $a\ge 1$ as in our case; these are the only two propositions from that paper we will need. 


For $y\in \R$ we use the notation $\overline y :=\varepsilon \left[ \frac{y}{\varepsilon} \right]$. The main aim of this subsection is to prove the following.
\begin{prop}\label{prop:approx}
    Consider the random fields $u_t(x)$ and $U_t^{(\varepsilon)}(x)$ as defined in \eqref{u_t:x} and \eqref{U_t:x}. Fix $t>0$ and $p\ge 2$. Then 
    \begin{align*}
       \sup_{x\in \ve \Z}\left\|u_t(x) - U^{(\varepsilon)}_t(x)\right\|_p^2 \lesssim \ve.
    \end{align*}
\end{prop}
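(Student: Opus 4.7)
I aim to bound $\Delta_t := \sup_{x\in\ve\Z}\|u_t(x) - U^{(\ve)}_t(x)\|_p^2$ by comparing the mild formulations \eqref{u_t:x} and \eqref{U_t:x} term by term and closing with a weakly singular Gronwall inequality. Uniform-in-$(\ve,s\le t,y)$ $L^p$-moment bounds on $u_s$ and $U_s^{(\ve)}$, which follow from boundedness of $u_0$ and Lipschitz continuity of $b,\sigma$, are taken for granted throughout.

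For $x\in\ve\Z$ I first put both mild formulations on a common footing by rewriting the discrete drift sum in \eqref{U_t:x} as $\int_\R \ve^{-1}P^{(\ve)}_{t-s}(x-\overline y) b(U^{(\ve)}_s(\overline y))\,dy$ and the noise sum analogously using $dB^{(\ve)}_s(y) = \ve^{-1/2}\int_y^{y+\ve}W(dz\,ds)$. In this form the difference $u_t(x)-U^{(\ve)}_t(x) = I_0 + I_D + I_N$ with integrands comparable pointwise in $y\in\R$. I decompose each of $I_D, I_N$ by adding and subtracting $\ve^{-1}P^{(\ve)}_{t-s}(x-\overline y) f(u_s(y))$ for $f\in\{b,\sigma\}$, producing a \emph{kernel piece} (difference of kernels weighted by $f(u_s(y))$) and a \emph{Lipschitz piece} (same kernel, weighted by $f(u_s(y))-f(U^{(\ve)}_s(\overline y))$); the initial piece $I_0$ is handled the same way via \eqref{Ini:p:ISDE:ep}. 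The kernel pieces are controlled by the local-CLT estimates of Propositions 3.3 and 3.6 of \cite{foon-jose-li} (applicable with $\alpha=a=2$ as noted after \eqref{U_t:x}); combined with the moment bounds and with Walsh--BDG for the noise, Minkowski for the drift, they contribute $O(\ve)$ to $\Delta_t$.

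For the Lipschitz pieces I use $|f(u_s(y))-f(U^{(\ve)}_s(\overline y))| \lesssim |u_s(y)-u_s(\overline y)| + |u_s(\overline y)-U^{(\ve)}_s(\overline y)|$. The second increment feeds $\Delta_s$ back into the estimate. The crucial input on the first is the sharp $L^p$-modulus bound $\sup_{|y-z|\le\ve}\|u_s(y)-u_s(z)\|_p^2 \lesssim \ve$, uniformly in $s\le t$, obtained by applying Walsh--BDG directly to the mild formula for $u_s(y)-u_s(z)$ using the elementary identity $\int_0^s\!\!\int_\R|p_{s-r}(w-y)-p_{s-r}(w-z)|^2\,dw\,dr \lesssim |y-z|$, together with a short separate treatment of the initial contribution $(p_s*u_0)(y)-(p_s*u_0)(z)$ via the gradient bound $\lesssim \|u_0\|_\infty/\sqrt s$ and a split of the $s$-integral at $s = |y-z|^2$. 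Using $\int_\R[\ve^{-1}P^{(\ve)}_{t-s}(x-\overline y)]^2\,dy \lesssim (t-s)^{-1/2}$, the collected estimates yield
\bes
\Delta_t \lesssim \ve + \int_0^t \frac{\Delta_s}{\sqrt{t-s}}\,ds,
\ees
and the weakly singular Gronwall lemma gives $\Delta_t \lesssim \ve$.

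The main obstacle is attaining the sharp rate $\ve$ rather than $\ve^{1-2\kappa}$: the naive bound $\|u_s(y)-u_s(z)\|_p \lesssim |y-z|^{1/2-\kappa}$ from pathwise H\"older regularity loses a factor $\ve^{2\kappa}$ that cannot be recovered in the $(t-s)^{-1/2}$-weighted time integral. The fix is the direct $L^p$-increment computation above, which is tight at exponent $1/2$ thanks to the explicit $L^2$-kernel identity $\int_\R[p_t(w)-p_t(w-a)]^2\,dw = \frac{2}{\sqrt{4\pi t}}(1 - e^{-a^2/(4t)})$, combined with Walsh--BDG to pass to general $L^p$. Relative to the drift-free approximation results of \cite{jose-khos-muel} and \cite{foon-jose-li}, the only new ingredient here is the drift term; since $b$ is Lipschitz in this section, its handling is strictly parallel to and easier than that of $\sigma$ (Minkowski replaces BDG, contributing an additional term of the form $\int_0^t \Delta_s\,ds$ that is subsumed by the same Gronwall step).
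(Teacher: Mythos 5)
Your proposal is correct and follows essentially the same approach as the paper: the paper organizes the argument via a chain of intermediate fields $u \approx u^{(1,\ve)} \approx u^{(2,\ve)} \approx U^{(1,\ve)} \approx U^{(2,\ve)}$ and then closes a Gronwall loop comparing $U^{(2,\ve)}$ with $U^{(\ve)}$, whereas you collapse the chain into a single kernel-piece/Lipschitz-piece decomposition and run Gronwall directly on $\Delta_t = \sup_x\|u_t(x)-U_t^{(\ve)}(x)\|_p^2$, but the underlying ingredients (Propositions 3.3 and 3.6 of \cite{foon-jose-li}, the sharp $L^p$-modulus $\|u_s(y)-u_s(\overline y)\|_p^2\lesssim\ve$ with a split at $s\sim\ve^2$, Walsh--BDG, and the weakly singular Gronwall lemma) are identical. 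When writing this up you would still need the a priori uniform-in-$\ve$ moment bound on $U^{(\ve)}$ (the paper's Lemma \ref{lem:UeWe:mombd}) to justify that $\Delta_t$ is finite before applying Gronwall, and a time cutoff at $t-s\sim\ve^2$ in the kernel piece since the local-CLT bound is not integrable down to $s=t$.
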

Define the intermediate random fields as in \cite{jose-khos-muel},
\begin{align*} 
\begin{split}
    u^{(1,\varepsilon)}_t(x) &= (p_t * u_0)(x) +  \int_0^{(t-\varepsilon^2)\vee 0}\int_\R p_{t-s}(y-x) b(u_s(\overline y)) dy ds\\
    &\hspace{4cm}+  \int_0^{(t-\varepsilon^2)\vee 0}\int_\R p_{t-s}(y - x) \sigma(u_s(\overline y)) W(dyds) \\
    &=: (p_t * u_0)(x) + \mathcal D_{u^{(1,\varepsilon)}}(t,x) + \mathcal N_{u^{(1,\varepsilon)}}(t,x),
\end{split}
\end{align*}
\begin{align*} 
\begin{split}
    u^{(2,\varepsilon)}_t(x) &= (p_t * u_0)(x) + \int_0^{(t-\varepsilon^2)\vee 0}\int_\R p_{t-s}(\overline y-\overline x) b(u_s(\overline y)) dy ds \\
    &\hspace{4cm}+  \int_0^{(t-\varepsilon^2)\vee 0}\int_\R p_{t-s}(\overline y - \overline x) \sigma(u_s(\overline y)) W(dy ds) \\
    &=: (p_t * u_0)(x) + \mathcal D_{u^{(2,\varepsilon)}}(t,x) + \mathcal N_{u^{(2,\varepsilon)}}(t,x),
\end{split}
\end{align*}
\begin{align*} 
\begin{split}
    U^{(1,\varepsilon)}_t(x) &= (p_t * u_0)(x) +  \int_0^{(t-\varepsilon^2)\vee 0}\int_\R \frac{P^{(\varepsilon)}_{t-s}(\overline y - \overline x)}{\varepsilon} b(u_s(\tilde y)) dy ds \\
    &\hspace{4cm}+ \int_0^{(t-\varepsilon^2)\vee 0}\int_\R \frac{P^{(\varepsilon)}_{t-s}(\overline y - \overline x)}{\varepsilon} \sigma(u_s(\overline y)) W(dy ds) \\
    &=: (p_t * u_0)(x) + \mathcal D_{U^{(1,\varepsilon)}}(t,x) + \mathcal N_{U^{(1,\varepsilon)}}(t,x),
\end{split}
\end{align*}
\begin{align*} 
\begin{split}
    U^{(2,\varepsilon)}_t(x) &= (p_t * u_0)(x) + \int_0^{t}\int_\R \frac{P^{(\varepsilon)}_{t-s}(\overline y -\overline x)}{\varepsilon} b(u_s(\overline y)) dy ds \\
    &\hspace{4cm}+ \int_0^{t} \int_\R\frac{P^{(\varepsilon)}_{t-s}(\overline y - \overline x)}{\varepsilon} \sigma(u_s(\overline y)) W(dy ds) \\
    &=: (p_t * u_0)(x) +\mathcal D_{U^{(2,\varepsilon)}}(t,x) + \mathcal N_{U^{(2,\varepsilon)}}(t,x).
\end{split}
\end{align*}
To simplify the exposition we use the following definiton.
\begin{defn} \label{def:apprx}
For two random fields $Y^{(\varepsilon)}(t, x),\,  Z^{(\varepsilon)}(t, x),\, t\ge 0, \, x\in \R$, say that $Y \approx Z$ if for each $T>0$ and all $p\ge 2$
\[ \sup_{x\in \R, \, t\le T}\left\|Y^{(\ve)}(t, x)-Z^{(\ve)}(t, x)\right\|^2_p \,\lesssim\, \varepsilon 
.\]
where the constants in $\lesssim$ do not depend on $\varepsilon$. 
\end{defn}


The following is standard. 
\begin{lem}[Theorem 4.2.1 and Theorem 4.3.4 in \cite{dala-sanz}] \label{lem:mom:hold}
For each $p\ge 2$ we have the following bounds on the moments: for each fixed $T>0$
\bes \sup_{x\in \R,\, t\le T} \left\| u_t(x) \right\|_p^2 <\infty.\ees
We have the following bounds on the spatial H\"older regularity: for each fixed $t>0$
\bes  \sup_{x,\, y \in \R, \, |x-y|\le \ve} \left\| u_t(x) -u_t(y)\right\|_p^2\lesssim \ve.\ees
\end{lem}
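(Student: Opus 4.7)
The plan is to verify both assertions as standard applications of the Dalang-Sanz moment machinery, relying only on the fact that $b$ and $\sigma$ are globally Lipschitz in Section \ref{sec:comparison} (so both have at most linear growth) and that $u_0$ is bounded with compact support. Starting from the mild formulation \eqref{u_t:x}, I would split $u_t(x)$ into the deterministic heat flow $(p_t*u_0)(x)$, the drift integral $\mathcal D(t,x)$, and the Walsh stochastic integral $\mathcal N(t,x)$, and estimate each separately.

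For the uniform $L^p$ bound, the heat flow is dominated by $\|u_0\|_\infty$. For $\mathcal D$, Minkowski's inequality plus the linear growth $|b(z)|\le C(1+|z|)$ give
\begin{equation*}
\|\mathcal D(t,x)\|_p \,\lesssim\, 1+\int_0^t \sup_{y\in\R}\|u_s(y)\|_p\, ds.
\end{equation*}
For $\mathcal N$, the Burkholder-Davis-Gundy inequality in the form appropriate to Walsh integrals, combined with $|\sigma(z)|\le L|z|$ (using $\sigma(0)=0$), yields
\begin{equation*}
\|\mathcal N(t,x)\|_p^2 \,\lesssim\, \int_0^t\int_\R p_{t-s}(y-x)^2\,\|u_s(y)\|_p^2\, dy\, ds,
\end{equation*}
and the spatial integral equals $(4\pi(t-s))^{-1/2}$. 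Setting $\phi(t):=\sup_{x\in\R}\|u_t(x)\|_p^2$ and squaring the drift bound, I arrive at
\begin{equation*}
\phi(t) \,\lesssim\, 1 + \int_0^t \frac{\phi(s)}{\sqrt{t-s}}\, ds,
\end{equation*}
and the first claim follows from the singular-kernel Gronwall lemma.

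For the spatial Hölder bound, I would estimate the three analogous differences separately. Since $p_t*u_0$ is smooth in $x$ for $t>0$, the heat-flow contribution is $O(|x-y|)$. For the drift piece, the pointwise gradient estimate $\int_\R|p_r(z-x)-p_r(z-y)|\,dz \lesssim (|x-y|/\sqrt r)\wedge 1$ combined with the already-established uniform $L^p$ bound gives an $O(|x-y|)$ contribution. The dominant term is the noise difference, where BDG and the classical heat-kernel estimate
\begin{equation*}
\int_0^t\int_\R \bigl|p_{t-s}(z-x)-p_{t-s}(z-y)\bigr|^2\, dz\, ds \,\lesssim\, |x-y|
\end{equation*}
produce $\|\mathcal N(t,x)-\mathcal N(t,y)\|_p^2 \lesssim |x-y|$, which is sharp in $|x-y|$ and yields the claimed $\ve$ on the right-hand side.

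The only mildly delicate ingredient is the singular-kernel Gronwall inequality in the moment step; once that is in place everything is a direct computation, which is why the paper simply cites Theorems 4.2.1 and 4.3.4 of \cite{dala-sanz} rather than reproducing the argument. I do not anticipate any new obstacle since the Lipschitz assumption on $b$ active in this section eliminates the growth issues that drive the rest of the paper.
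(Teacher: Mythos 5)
Your argument is correct and is precisely the standard textbook proof that the paper delegates to \cite{dala-sanz}: the decomposition of the mild solution into heat flow, drift integral and Walsh integral, Minkowski plus linear growth of $b$ for the drift, Burkholder--Davis--Gundy plus $|\sigma(z)|\le L|z|$ for the noise, the identity $\int_\R p_r(z)^2\,dz = (4\pi r)^{-1/2}$, and the singular Gronwall lemma for the a priori moment bound, followed by the heat-kernel difference estimates for the H\"older modulus. Since the paper itself gives no proof but only cites Theorems 4.2.1 and 4.3.4 of \cite{dala-sanz}, there is nothing to compare methodologically; your sketch is the one that reference carries out. The only point worth tightening if you were to write this out in full is the a priori finiteness of $\phi(t)=\sup_x\|u_t(x)\|_p^2$ before the singular Gronwall step (handled by Picard iteration or truncation, as in the cited source), but this does not affect the correctness of the outline.
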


We will first show $u \approx u^{(1,\varepsilon)} \approx u^{(1,\varepsilon)} \approx U^{(1,\varepsilon)} \approx U^{(2,\varepsilon)}$. Thanks to Lemma \ref{lem:mom:hold}, the arguments in \cite{jose-khos-muel} show that $\mathcal N \approx \mathcal N_{u^{(1,\varepsilon)}} \approx \mathcal N_{u^{(2,\varepsilon)}} \approx \mathcal N_{U^{(1,\varepsilon)}} \approx \mathcal N_{U^{(2,\varepsilon)}}$, so we do not repeat them here. The similar statement for drift terms is the content of next lemma.

\begin{lem} \label{Apprx}
    We have $ \mathcal D \approx \mathcal D_{u^{(1,\varepsilon)}} \approx  \mathcal D_{u^{(2,\varepsilon)}} \approx  \mathcal D_{U^{(1,\varepsilon)}} \approx  \mathcal D_{U^{(2,\varepsilon)}}$.
\end{lem}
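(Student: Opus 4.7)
The plan is to chain the five fields $\mathcal D,\, \mathcal D_{u^{(1,\varepsilon)}},\, \mathcal D_{u^{(2,\varepsilon)}},\, \mathcal D_{U^{(1,\varepsilon)}},\, \mathcal D_{U^{(2,\varepsilon)}}$ via four pairwise $\approx$ comparisons, mirroring the noise-term chain already handled in \cite{jose-khos-muel, foon-jose-li}. The crucial simplification is that drift terms are ordinary (not stochastic) integrals, so Minkowski's inequality replaces Burkholder--Davis--Gundy: every $L^2$ heat-kernel bound used in the noise case becomes an $L^1$ bound here, which yields the same $\varepsilon$ order.

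The common inputs will be: (i) since $b$ is globally Lipschitz and Lemma \ref{lem:mom:hold} gives $\sup_{s \le T,\, y \in \R}\|u_s(y)\|_p < \infty$, we have $\sup_{s,y}\|b(u_s(y))\|_p < \infty$; (ii) the spatial regularity $\|u_s(y)-u_s(\overline y)\|_p^2 \lesssim \varepsilon$ from Lemma \ref{lem:mom:hold}; (iii) the elementary heat-kernel estimate $\int_\R |p_{t-s}(y-x) - p_{t-s}(\overline y - \overline x)|\,dy \lesssim \varepsilon/\sqrt{t-s}$ valid for $|y-\overline y|+|x-\overline x|\le 2\varepsilon$, which follows from the mean value theorem together with $\int_\R|\partial_x p_t|\,dx \lesssim t^{-1/2}$; and (iv) the local CLT estimate provided by Proposition 3.6 of \cite{foon-jose-li}, controlling $|P^{(\varepsilon)}_{t-s}(\overline y - \overline x)/\varepsilon - p_{t-s}(\overline y - \overline x)|$ for $t-s\ge\varepsilon^2$.

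Executing the chain: for $\mathcal D \approx \mathcal D_{u^{(1,\varepsilon)}}$ I split the difference into a time-boundary piece on $[(t-\varepsilon^2)\vee 0, t]$, handled by $\int p_{t-s}(y-x)\,dy = 1$ and input (i), which contributes $\lesssim \varepsilon^2$; and a spatial-smoothing piece, where Lipschitzness of $b$, input (ii), and Minkowski give an upper bound $\sqrt{\varepsilon}\cdot T$. Squaring yields the required $\lesssim\varepsilon$. For $\mathcal D_{u^{(1,\varepsilon)}}\approx \mathcal D_{u^{(2,\varepsilon)}}$, Minkowski together with input (i) and input (iii) yields $\lesssim \varepsilon\int_0^{t-\varepsilon^2}(t-s)^{-1/2}\,ds\lesssim \varepsilon\sqrt T$, i.e.\ squared norm $\lesssim \varepsilon^2$. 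The step $\mathcal D_{u^{(2,\varepsilon)}}\approx \mathcal D_{U^{(1,\varepsilon)}}$ uses input (iv) multiplied by the uniformly bounded $\|b(u_s(\overline y))\|_p$ and integrated in space-time. Finally $\mathcal D_{U^{(1,\varepsilon)}}\approx \mathcal D_{U^{(2,\varepsilon)}}$ is another time-boundary estimate, now via $\sum_{y\in\varepsilon\Z}P^{(\varepsilon)}_t(\overline y - \overline x) = 1$.

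The only subtle point, and the closest thing to an obstacle, will be confirming that Proposition 3.6 of \cite{foon-jose-li} is applicable in our regime $a=2$, since that paper is written under $0<a<1$; but as the authors observe in the paragraph following \eqref{U_t:x}, both Propositions 3.3 and 3.6 extend without change to $a\ge 1$, so this is just bookkeeping. No genuinely new analytic difficulty arises beyond what is needed for the noise terms in \cite{jose-khos-muel}; indeed the drift case is easier, since Minkowski applies directly to the $L^p$ norm.
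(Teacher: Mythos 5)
Your proposal is correct and follows essentially the same route as the paper: the same chain of four pairwise comparisons, the same inputs (Lipschitzness of $b$, the moment and H\"older bounds of Lemma \ref{lem:mom:hold}, a derivative bound on the Gaussian kernel, and the local limit theorem estimates of Propositions 3.3 and 3.6 of \cite{foon-jose-li}), and the same treatment of the time-boundary and spatial-discretization errors. The only cosmetic differences are that the paper applies Jensen's inequality where you use Minkowski in step one (equivalent here, since $p_{t-s}(\cdot - x)\,dy$ is a probability measure), and the paper keeps track of both the $\varepsilon/\sqrt{s}$ and $\varepsilon^2/s$ contributions in step two, the latter being dominated by the former on the relevant range $s\ge\varepsilon^2$ exactly as your bound (iii) already incorporates.
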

\begin{proof}
    $(i)$ Jensen's inequality and Lemma \ref{lem:mom:hold} gives
    \begin{align*}
        & \left\| \mathcal D(t,x) - \mathcal D_{u^{(\varepsilon)}}(t,x) \right\|_p^2 \\
        &\lesssim \left\|  \int_0^{(t-\varepsilon^2)\vee 0}\int_\R p_{t-s}(y-x) \left[b(u_s(y)) - b(u_s(\overline y)) \right] dy ds \right\|^2_p \\
&\hspace{5cm}+ \left\|  \int_{(t-\varepsilon^2)\vee 0}^t\int_\R p_{t-s}(y-x) b(u_s(y)) \,dy ds\right\|^2_p \\
        &\lesssim t \int_0^{t}\int_\R p_{t-s}(y-x) \left\| u_s(y) - u_s(\overline y) \right\|^2_p dy ds + \varepsilon^2 \int_{(t-\varepsilon^2)\vee 0}^t\int_\R p_{t-s}(y-x) \left\| u_s(y)  \right\|^2_p dy ds\\
        &\lesssim \varepsilon.
    \end{align*}
    This gives $\mathcal D \approx \mathcal D_{u^{(1,\varepsilon)}}$.

    $(ii)$ From Lemma \ref{lem:mom:hold} we obtain
\begin{flalign*}
    \left\|  \mathcal D_{u^{(1,\varepsilon)}}(t,x) -  \mathcal D_{u^{(2,\varepsilon)}}(t,x)\right\|_p^2 &=\left\|  \int_0^{(t-\varepsilon^2)\vee 0}\int_\R \left | p_{t-s}(y-x) - p_{t-s}(\overline y-\overline x) \right | b(u_s(\overline y)) dy ds \right\|^2_p &\\
    &\lesssim \left [  \int_{\varepsilon^2}^{t\vee \ve^2} \int_\R|p_{s}(y-x) - p_{s}(\overline y-\overline x)| dy ds \right ]^2
\end{flalign*}
We now estimate the integral inside the brackets. It is clear that $\frac{\partial}{\partial x}p_s(x) \lesssim \frac{1}{\sqrt s}p_s(\frac{x}{2})$, whence 
    \begin{flalign*}
   \int_{\varepsilon^2}^{t\vee \ve^2} \int_\R\bigg | p_{s}(y-x) - p_{s}(\tilde y-\tilde x) \bigg | dy ds&=  \int_{\varepsilon^2}^{t\vee \ve^2}\int_\R\left | \int_{y-x}^{\overline y-\overline x} \frac{\partial}{\partial z}p_s(z) dz \right | dsdy &\\
    &\lesssim  \int_{\varepsilon^2}^{t\vee \ve^2}\int_\R\left | \int_{y-x}^{\overline y-\overline x} \frac{1}{\sqrt s}p_s\left(\frac{z}{2}\right)dz \right | dyds &\\
&\lesssim \int_{\varepsilon^2}^{t\vee \ve^2}\frac{1}{\sqrt s}\left[\varepsilon^2 p_s(0)+\varepsilon \int_\R p_s\left(\frac{z}{2}\right)dz \right] ds,
\end{flalign*}
by breaking the $\R$ integral into regions $|y-x|\le \varepsilon$ and its complement. The integral in the last line above is of order $\ve^2 |\log \ve|$, proving $\mathcal D_{u^{(1,\varepsilon)}} \approx  \mathcal D_{u^{(2,\varepsilon)}}$.

$(iii)$ Using Lipschitz continuity of $b$ and Lemma \ref{lem:mom:hold},
    \begin{flalign*}
        \left\| \mathcal D_{u^{(2,\varepsilon)}}(t,x) -\mathcal D_{U^{(1,\varepsilon)}}(t,x) \right\|_p^2 &=\left\|  \int_0^{t-\varepsilon^2}\int_\R \left\{ p_{t-s}(\overline y - \overline x) - \frac{P^{(\varepsilon)}_{t-s}(\overline y - \overline x)}{\varepsilon} \right\} b(u_s(\overline y)) dy ds  \right\|^2_p &\\
        &\lesssim \left [\int_{\varepsilon^2}^{t\vee \ve^2}\int_\R  \left| p_{s}(\overline y - \overline x) - \frac{P^{(\varepsilon)}_{s}(\overline y -\overline x)}{\varepsilon} \right| dy ds \right]^2.
    \end{flalign*}
    Using Proposition 3.3 and Proposition 3.6 from \cite{foon-jose-li} we obtain the following bound for the expression inside the square brackets :
    \begin{flalign*}
        &\lesssim \ve\int_{\varepsilon^2}^{t\vee \ve^2} \sum_{\substack{j\in\Z \\ |j\varepsilon| \le \sqrt{s}}} \left| p_{s}(j\varepsilon) - \frac{P^{(\varepsilon)}_{s}(j\varepsilon)}{\varepsilon} \right| ds + \ve\int_{\varepsilon^2}^{t\vee \ve^2} \sum_{\substack{j\in\Z \\ |j\varepsilon|>\sqrt{s}}} \left| p_{s}(j\varepsilon) - \frac{P^{(\varepsilon)}_{s}(j\varepsilon)}{\varepsilon} \right|  ds &\\
        &\lesssim \int_{\varepsilon^2}^{t\vee \ve^2} \sum_{\substack{j\in\Z \\ |j\varepsilon| \le \sqrt{s}}} \frac{\varepsilon^{3}}{s^{3/2}} ds + \int_{\varepsilon^2}^{t\vee \ve^2} \sum_{\substack{j\in\Z \\ |j\varepsilon|>\sqrt{s}}} \frac{s\varepsilon^{3}}{|j\varepsilon|^{5}} ds, \\
        &\lesssim \int_{\varepsilon^2}^{t\vee \ve^2} \frac{\varepsilon^{2}}{s} ds. 
    \end{flalign*}
This is of order $\varepsilon^2 \log \ve$, proving $\mathcal D_{u^{(1,\varepsilon)}} \approx  \mathcal D_{U^{(1,\varepsilon)}}$.

$(iv)$ Minkowski's inequality gives
    \begin{flalign*}
   \left\| \mathcal D_{U^{(1,\varepsilon)}}(t,x) -\mathcal D_{U^{(2,\varepsilon)}}(t,x) \right\|_p^2 &=\left\| \int_{(t-\varepsilon^2)\vee 0}^t\int_\R \frac{P^{(\varepsilon)}_{t-s}(\overline y - \overline x)}{\varepsilon} \, b(u_s(\tilde y)) dy ds \right\|^2_p \\
   &\lesssim \left[\int_0^{\varepsilon^2} \sum_{j\in \Z} \frac{P^{(\varepsilon)}_{s}(j\varepsilon)}{\varepsilon} ds \right]^2 &\\
    &\lesssim \varepsilon^2,
\end{flalign*}
shows $\mathcal D_{U^{(1,\varepsilon)}} \approx  \mathcal D_{U^{(2,\varepsilon)}}$ and hence the proof of the lemma is complete.
\end{proof}

We consider the difference between convolutions of initial profiles in the next lemma.
\begin{lem} \label{Apprx5}
We have 
    \begin{align*}
     \sup_{x\in \ve\Z}\left | \left(p_{t} * u_0\right)(x) - \left(P^{(\varepsilon)}_{t} * U_0^{(\ve)}\right)(x) \right| \lesssim \left(1\wedge  \frac{\varepsilon}{\sqrt t}\right).
    \end{align*}
\end{lem}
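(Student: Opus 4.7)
The plan is to reduce to bounding an $L^1$ difference of the two heat kernels against $u_0$. First I would rewrite the discrete convolution as a Lebesgue integral by using $U_0^{(\ve)}(y) = \frac{1}{\ve}\int_y^{y+\ve} u_0(w)\,dw$, so that for $x\in\ve\Z$
\[
(P^{(\ve)}_t * U_0^{(\ve)})(x)=\int_\R \frac{P^{(\ve)}_t(x-\overline z)}{\ve}\,u_0(z)\,dz.
\]
Pulling out $\|u_0\|_\infty$, matters reduce to estimating
\[
I_t(x):=\int_\R\left|p_t(x-z)-\frac{P^{(\ve)}_t(x-\overline z)}{\ve}\right|dz
\]
by $2$ (the trivial bound, since both $p_t(x-\cdot)$ and $z\mapsto P^{(\ve)}_t(x-\overline z)/\ve$ are probability densities on $\R$ -- the latter because $\int_\R P^{(\ve)}_t(x-\overline z)/\ve\,dz=\sum_y P^{(\ve)}_t(x-y)=1$) and by $\ve/\sqrt t$ in the regime $t\ge \ve^2$. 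Note that for $t<\ve^2$ one has $\ve/\sqrt t>1$, so the trivial bound already delivers the conclusion there.

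For $t\ge \ve^2$ I would split $I_t(x)\le I_1+I_2$ via the triangle inequality, with
\[
I_1:=\int|p_t(x-z)-p_t(x-\overline z)|\,dz,\qquad I_2:=\int\left|p_t(x-\overline z)-\frac{P^{(\ve)}_t(x-\overline z)}{\ve}\right|dz.
\]
For $I_1$, I would write $p_t(x-z)-p_t(x-\overline z)=\int_{x-z}^{x-\overline z}p_t'(w)\,dw$ and swap the order of integration: each $w$ enters the $z$-range for a set of measure at most $\ve$, giving $I_1\le \ve\int|p_t'(w)|\,dw=2\ve\,p_t(0)\lesssim \ve/\sqrt t$. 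For $I_2$, since $x\in\ve\Z$ and $\overline z\in\ve\Z$, writing $x-\overline z=j\ve$ collapses the integral into the lattice sum
\[
I_2=\sum_{j\in\Z}\ve\left|p_t(j\ve)-\frac{P^{(\ve)}_t(j\ve)}{\ve}\right|,
\]
to which I would apply the local limit theorem bounds of Propositions 3.3 and 3.6 of \cite{foon-jose-li} (with $\alpha=a=2$, exactly as in step (iii) of Lemma \ref{Apprx}); the bulk contribution $|j\ve|\le\sqrt t$ and the tail $|j\ve|>\sqrt t$ each yield $\lesssim \ve^2/t$, which is dominated by $\ve/\sqrt t$ once $t\ge \ve^2$.

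The argument is largely routine bookkeeping. The only point requiring genuine attention is the bulk/tail split when invoking the local CLT for $I_2$; once that is in place, adding $I_1$ and $I_2$ gives $I_t(x)\lesssim \ve/\sqrt t+\ve^2/t\lesssim \ve/\sqrt t$ for $t\ge \ve^2$, completing the proof.
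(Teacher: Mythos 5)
Your proof is correct and follows essentially the same route as the paper's: reduce to an $L^1$ bound on the kernel difference, split via the triangle inequality into the piece replacing $z$ by $\overline z$ inside the Gaussian (your $I_1$, corresponding to step (ii) of Lemma \ref{Apprx}) and the lattice-point comparison between $p_t$ and $P^{(\ve)}_t/\ve$ (your $I_2$, corresponding to step (iii) and the same Propositions 3.3 and 3.6 of \cite{foon-jose-li}). The minor cosmetic improvements in your write-up -- the direct Fubini argument for $I_1$ giving $\ve/\sqrt t$ cleanly (avoiding the extra $\ve^2/t$ term the paper carries), and the explicit observation that both $p_t(x-\cdot)$ and $P^{(\ve)}_t(x-\overline{\,\cdot\,})/\ve$ are probability densities to dispatch the regime $t<\ve^2$ -- are sound but do not change the substance of the argument.
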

\begin{proof}
It can be inferred from calculations carried out in step $(ii)$ of Lemma \ref{Apprx} that
\[ \sup_{x\in \ve\Z}\left | \int_\R p_{t}(y -  x)u_0(y) dy -  \int_\R p_{t}(\overline y -  x)u_0(y) dy\right|\lesssim \frac{\ve}{\sqrt t}.\] 
From the definition of initial profile $U_0^{(\ve)}(x)$ given in \eqref{Ini:p:ISDE:ep} and the calculations involving local limit theorems in step $(iii)$ of Lemma \ref{Apprx} we deduce for $t\ge \varepsilon^2$,
    \begin{flalign*}
        &\sup_{x\in \ve\Z}\left | \left(p_{t} * u_0\right)(x) - \left(P^{(\varepsilon)}_{t} * U_0^{(\ve)}\right)(x) \right|  \\
        &\lesssim \frac{\ve}{\sqrt t} +\sup_{x\in \ve\Z}\left | \sum_{\tilde y\in \varepsilon\Z} \int_{\overline y}^{\overline y + \varepsilon} p_{t}(\overline y - x)u_0(y) dy - \sum_{\overline y\in \varepsilon\Z} \int_{\overline y}^{\overline y + \varepsilon} \frac{P^{(\varepsilon)}_{t}(\overline y - x)}{\varepsilon} u_0(y) dy \right| &\\
        &\lesssim\frac{\ve}{\sqrt t}+ \varepsilon\sum_{j\in \Z} \left | p_{t}(j\varepsilon) - \frac{P^{(\varepsilon)}_{t}(j\varepsilon)}{\varepsilon} \right| &\\
        &\lesssim \frac{\varepsilon}{\sqrt t} + \frac{\ve^2}{t}.
    \end{flalign*}
   Thus the lemma is proved.
\end{proof}

In the final approximation we need to fix $t>0$, rather than take $\sup_{t\le T}$ as in Lemma \ref{Apprx}. The reason for this is that the local limit theorem approximation is not good for $t\le \ve^2$. 
\begin{lem} \label{Apprx6} Fix $t>0$. Then 
\[  \sup_{x\in \ve \Z}\left\|U^{(2, \varepsilon)}_t(x) - U^{(\varepsilon)}_t(x)\right\|_p^2\lesssim \ve.\]
\end{lem}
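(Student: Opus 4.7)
The approach is to subtract the mild-solution representations of $U^{(\ve)}_t(x)$ and $U^{(2,\ve)}_t(x)$ at $x\in\ve\Z$ and close the resulting estimate via a singular Gronwall argument. Rewriting the discrete kernel in its density form $\frac{P^{(\ve)}_{t-s}(\overline y-x)}{\ve}$ and noting that $\overline x=x$ when $x\in\ve\Z$, the difference decomposes as
\[
U^{(\ve)}_t(x)-U^{(2,\ve)}_t(x)=\bigl[\bigl(P^{(\ve)}_t*U^{(\ve)}_0\bigr)(x)-(p_t*u_0)(x)\bigr]+\Delta^{b}(t,x)+\Delta^{\sigma}(t,x),
\]
where $\Delta^{b}$ and $\Delta^{\sigma}$ are of the form $\int_0^t\!\int_\R \frac{P^{(\ve)}_{t-s}(\overline y-x)}{\ve}\bigl[\phi(U^{(\ve)}_s(\overline y))-\phi(u_s(\overline y))\bigr]\,dy\,ds$ (respectively $W(dy\,ds)$) for $\phi=b$ and $\phi=\sigma$. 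I would set
\[
F_\ve(s):=\sup_{x\in\ve\Z}\bigl\|U^{(\ve)}_s(x)-U^{(2,\ve)}_s(x)\bigr\|_p^{\,2},
\]
so that the lemma reduces to showing $F_\ve(t)\lesssim\ve$ at the fixed $t>0$.

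Lemma \ref{Apprx5} bounds the initial-data difference by $1\wedge\ve/\sqrt t$, hence by $\ve^2/t=O(\ve)$ after squaring at fixed $t$. The drift term is handled by Lipschitz continuity of $b$, Minkowski's inequality, and the normalization $\int_\R \frac{P^{(\ve)}_{t-s}(\overline y-x)}{\ve}\,dy=1$, yielding
\[
\|\Delta^{b}(t,x)\|_p\lesssim\int_0^t\sup_{z\in\ve\Z}\bigl\|U^{(\ve)}_s(z)-u_s(z)\bigr\|_p\,ds.
\]
For the noise term, Burkholder-Davis-Gundy followed by Minkowski at the $L^{p/2}$ level of the quadratic variation gives
\[
\|\Delta^{\sigma}(t,x)\|_p^{\,2}\lesssim\int_0^t\sum_{z\in\ve\Z}\frac{P^{(\ve)}_{t-s}(z-x)^2}{\ve}\bigl\|U^{(\ve)}_s(z)-u_s(z)\bigr\|_p^{\,2}\,ds.
\]

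To close the loop I would transit through $U^{(2,\ve)}$: chaining Lemma \ref{Apprx} with Lemma \ref{Apprx5} and the noise counterparts of \cite{jose-khos-muel,foon-jose-li} yields $u\approx U^{(2,\ve)}$ in the sense of Definition \ref{def:apprx}, so by the triangle inequality
\[
\bigl\|U^{(\ve)}_s(z)-u_s(z)\bigr\|_p\le\sqrt{F_\ve(s)}+C\sqrt\ve
\]
uniformly in $z\in\ve\Z$. Combined with the local-CLT bound $\sup_z P^{(\ve)}_r(z)\lesssim \ve/\sqrt{r\vee\ve^2}$, which implies $\tfrac{1}{\ve}\sum_z P^{(\ve)}_r(z-x)^2\lesssim 1/\sqrt{r\vee\ve^2}$, the three pieces assemble into the singular integral inequality
\[
F_\ve(t)\lesssim\ve+t\int_0^t F_\ve(s)\,ds+\int_0^t\frac{F_\ve(s)}{\sqrt{(t-s)\vee\ve^2}}\,ds.
\]

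The main obstacle --- and the reason the conclusion is not of the stronger type in Definition \ref{def:apprx} --- is precisely the singular kernel $1/\sqrt{t-s}$ produced by the noise term: the local CLT bound for $P^{(\ve)}_r$ degenerates as $r\downarrow 0$, so there is no uniform-in-$t$ control. At fixed $t>0$, however, the kernel $r^{-1/2}$ is locally integrable, and a standard Henry-type singular Gronwall inequality absorbs it after finitely many iterations to yield $F_\ve(t)\lesssim\ve$, completing the proof.
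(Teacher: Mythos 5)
Your proposal follows essentially the same path as the paper: the same decomposition into initial-data, drift, and noise pieces, the same transit through $U^{(2,\ve)}$ via $u\approx U^{(2,\ve)}$ and the triangle inequality, the same $1/\sqrt{t-s}$ kernel bound, and the same singular Gronwall closure at fixed $t>0$. The one step you should make explicit before invoking the singular Gronwall inequality is the a priori bound $\sup_{0<\ve<1}\sup_{s\le t}F_\ve(s)<\infty$ (this is the role of Lemma \ref{lem:UeWe:mombd} in the paper), without which the iteration is not automatically justified.
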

\begin{proof}
The definitions of $U^{(\ve)}_t(x)$ and $U^{(2,\ve)}_t(x)$ give that for any $x\in \ve\Z$,
\begin{multline*}
    \left\|U^{(2, \varepsilon)}_t(x) - U^{(\varepsilon)}_t(x)\right\|_p^2 \lesssim \left|\left(p_{t} * u_0\right)(x) - \left(P^{(\varepsilon)}_{t} * U_0^{(\ve)}\right)(x) \right|^2 \\ +\left\|\text{D}_{\ve}(t,x)-\mathcal{D}_{U^{(2,\ve)}}(t,x)\right\|_p^2+\left\|\text{N}_{\ve}(t,x)-\mathcal{N}_{U^{(2,\ve)}}(t,x)\right\|_p^2.
\end{multline*}
Lemma \ref{Apprx} and the remark preceding it immediately imply that $u_t(x) \approx U_t^{(2,\ve)}(x)$. Consequently 
    \begin{flalign*}
   & \left\| \textnormal D_\ve(t,x) - \mathcal D_{U^{(2,\varepsilon)}}(t,x) \right\|_p^2 \\
    &= \left\| \int_0^t \int_\R \frac{P^{(\varepsilon)}_{t-s}(\overline y-x)}{\ve} \left[ b(u_s(\overline y)) - b(U_s^{(\varepsilon)}(\overline y)) \right] dy ds\right\|^2_p &\\
    &\lesssim t \int_0^t \int_\R\frac{P^{(\varepsilon)}_{t-s}(\overline y-x)}{\ve}\left[ \left\|u_s(\overline y) - U^{(2,\varepsilon)}_s(\overline y)\right\|^2_p + \left\|U^{(2, \varepsilon)}_s(\overline y) - U_s^{(\varepsilon)}(\overline y) \right\|^2_p \right] dy ds &\\
    &\lesssim t^2\varepsilon + t \int_0^t \int_\R\frac{P^{(\varepsilon)}_{t-s}(\overline y-x)}{\ve}\left\|U^{(2,\varepsilon)}_s(\overline y) - U_s^{(\varepsilon)}(\overline y) \right\|^2_p dy ds.
    \end{flalign*}
Similarly
 \begin{flalign*}
&\left\| \textnormal N_\ve(t,x) - \mathcal N_{U^{(2,\varepsilon)}}(t,x) \right\|_p^2 \\
&\lesssim \int_0^t \int_\R\left[ \frac{P^{(\varepsilon)}_{t-s}(\overline y-x)}{\ve}\right]^2 \left[ \left\|u_s(\overline y) - U^{(2,\varepsilon)}_s(\overline y)\right\|^2_p + \left\|U^{(2,\varepsilon)}_s(\overline y) - U_s^{(\varepsilon)}(\overline y) \right\|^2_p \right] dy ds &\\
&\lesssim C_t\varepsilon + \int_0^t \int_\R\left[ \frac{P^{(\varepsilon)}_{t-s}(\overline y-x)}{\ve}\right]^2 \left\|U^{(2,\varepsilon)}_s(\overline y) - U_s^{(\varepsilon)}(\overline y) \right\|^2_p dy ds,
\end{flalign*}
where 
\[ C_t:= \sup_{0<\ve<1}\int_0^t \int_\R\left[ \frac{P^{(\varepsilon)}_{t-s}(\overline y-x)}{\ve}\right]^2 \, dy ds.\]
The finiteness of $C_t$ follows from Lemma 3.3 of \cite{jose-khos-muel} (see also Lemma \ref{lem:Pe}).
Let us define the non-random function
\[\mathscr{S}(t):= \sup_{x\in \ve\Z} \left\|U^{(2,\varepsilon)}_t(x) - U^{(\varepsilon)}_t(x)\right\|_p^2.\]
From standard arguments (see Lemma \ref{lem:UeWe:mombd}) one obtains
\be\label{eq:S}
    \sup_{0< \ve< 1}\sup_{s\le t} \mathscr{S}(s) <\infty.
\ee
Using Lemma \ref{Apprx5} we obtain
\begin{align*}
    &\sup_{x\in \ve\Z} \left\|U^{(2,\varepsilon)}_t(x) - U^{(\varepsilon)}_t(x)\right\|_p^2 \\
    &\lesssim \left( 1\wedge \frac{\ve^2}{t}\right)+ (C_t+t^2) \varepsilon +t \int_0^t ds \,\mathscr{S}(s ) \int_\R\frac{P^{(\varepsilon)}_{t-s}(\overline y-x)}{\ve}dy  + \int_0^t ds\,\mathscr{S}(s )\int_\R\left[ \frac{P^{(\varepsilon)}_{t-s}(\overline y-x)}{\ve}\right]^2  dy. 
\end{align*}
Using $1\wedge x \le \sqrt x$ valid for all $x\ge 0$, it follows that for every fixed $T>0$, and $t\le T$ (see the proof of Lemma \ref{lem:Pe})
\be \label{eq:S:S}
 \mathscr{S}(t) \le  D_1\frac{\ve}{\sqrt t} + D_2\ve +D_3 \int_0^t ds\,\frac{\mathscr{S}(s)}{\sqrt{t-s}},
\ee
for some constants $D_1, D_2, D_3$ dependent only on $T$. 
 Denoting $g(t):= D_1\frac{\ve}{\sqrt t}+D_2\ve$ and $k^{*m}$ the $m$-fold convolution of the kernel $k(t)= t^{-\frac12}$ we obtain 
\[ \mathscr{S}(t) \le g(t) + D_2\ve \int_0^t \sum_{m=1}^{\infty} D_3^m k^{*m}(t-s)\, ds + D_1\ve\int_0^t \sum_{m=1}^{\infty} D_3^m \frac{k^{*m}(t-s)}{\sqrt s}\, ds.\]
Identifying the $m$-fold convolutions as the densities of Dirichlet distributions we obtain the bound
\[ \mathscr{S}(t) \le g(t) + D_2\ve \sum_{m=1}^{\infty} D_3^m \frac{\Gamma(\frac12)^m \Gamma(1)}{\Gamma(\frac{m}{2}+1)} t^{\frac{m}{2}}+ D_1\ve \sum_{m=1}^{\infty} D_3^m \frac{\Gamma(\frac12)^{m+1}}{\Gamma(\frac{m+1}{2})} t^{\frac{m-1}{2}}.\]
Both the series converge, and the lemma follows since $g(t)\lesssim \ve $ for fixed $t>0$. 
\end{proof}

The proof of Proposition \ref{prop:approx} is now complete from Lemma \ref{Apprx}, the remark preceding it and Lemma \ref{Apprx6}.\qed

The following proposition is as in Theorem 2.5 of \cite{jose-khos-muel}.

\begin{prop}\label{prop:U:u:prob}
Fix $0<t_0\le T$. Then for all $M>0$
\[ \sup_{t\in [t_0, T]} \sup_{\substack{x\in \ve \Z:\\ |x|\le \ve^{-M}}} \left|U_t^{(\ve)}(x) - u_t(x)\right|\stackrel{P}{\rightarrow} 0 \quad \text{ as } \ve \downarrow 0. \]
\end{prop}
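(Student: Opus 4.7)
The proof follows the blueprint of Theorem 2.5 in \cite{jose-khos-muel}, which handled the drift-free case. The overall strategy is to combine the pointwise $L^p$ estimate from Proposition \ref{prop:approx} with a uniform (in $\ve$) space-time H\"older modulus for both $u_t(x)$ and $U^{(\ve)}_t(x)$, and then to pass from pointwise to uniform control over the (polynomially large) index set via a grid/chaining argument with Markov's inequality. Since $b$ and $\sigma$ are both globally Lipschitz throughout this section, the drift term is easily absorbed into the existing machinery.

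First, Proposition \ref{prop:approx} gives, for each $p\ge 2$,
\[ \sup_{x\in \ve\Z}\|U^{(\ve)}_t(x) - u_t(x)\|_p^2 \lesssim \ve, \]
with implicit constant depending on $p$ and $T$ but not on $\ve$. Second, I would establish that for every $p\ge 2$, every $\alpha<1/2$, $\beta<1/4$, and $R>0$,
\[ \|U^{(\ve)}_t(x) - U^{(\ve)}_s(y)\|_p \lesssim |x-y|^{\alpha} + |t-s|^{\beta},\qquad s,t\in[t_0,T],\; x,y\in \ve\Z\cap[-R,R], \]
uniformly in $\ve\in(0,1]$. For $u_t(x)$ this is essentially Lemma \ref{lem:mom:hold}, with the drift term handled by Gr\"onwall since $b$ is Lipschitz. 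For $U^{(\ve)}_t(x)$ one repeats the computations used in the proof of Lemma \ref{Apprx}: write the increment $U^{(\ve)}_t(x)-U^{(\ve)}_s(y)$ as the sum of a convolution-of-initial-profile term, a drift term, and a noise term; each is controlled using the local limit theorem estimates of \cite[Propositions 3.3 and 3.6]{foon-jose-li}, the uniform $L^p$ moment bound from Lemma \ref{lem:UeWe:mombd}, and (for the drift) the Lipschitz continuity of $b$. Third, fix $\eta>0$, introduce a grid $\Gamma_{\ve}\subset[t_0,T]\times(\ve\Z\cap[-\ve^{-M},\ve^{-M}])$ of spacing $\ve^{\theta}$ for some small $\theta>0$, so that $|\Gamma_\ve|\lesssim \ve^{-M-2\theta}$. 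At grid points, the pointwise $L^p$ bound combined with Markov's inequality and a union bound, taking $p$ large enough that $p/2>M+2\theta$, controls $\max_{\Gamma_\ve}|U^{(\ve)}_{t_i}(x_j)-u_{t_i}(x_j)|$ by $\eta/2$ with probability tending to $1$. Between grid points, the H\"older estimate plus another Markov/union bound controls oscillations within a single grid cell, which are of order $\ve^{\theta\alpha}+\ve^{\theta\beta}$ and hence vanish as $\ve\downarrow 0$. Combining these two bounds yields the proposition.

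The main obstacle is the uniform-in-$\ve$ H\"older regularity of $U^{(\ve)}_t(x)$ in the second step. Although this is completely analogous to the corresponding step in \cite{jose-khos-muel}, one must verify that adding the drift does no harm: since $b$ is globally Lipschitz and we have uniform $L^p$ moment control on $U^{(\ve)}$ from Lemma \ref{lem:UeWe:mombd}, the drift contribution to $\|U^{(\ve)}_t(x)-U^{(\ve)}_s(y)\|_p$ is smoother (the kernel $P^{(\ve)}_{t}/\ve$ is integrable in time, unlike its square which drives the noise estimate), so the H\"older exponents of the drift-free case are preserved.
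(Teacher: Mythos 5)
Your overall template (pointwise $L^p$ estimate from Proposition \ref{prop:approx}, then upgrade to uniform convergence via a grid, Markov, and a modulus-of-continuity bound) is the right family of argument, but the execution you describe differs from the paper's in a way that introduces a real gap.

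The paper does \emph{not} chain in space at all. Since the spatial index set $\ve\Z\cap[-\ve^{-M},\ve^{-M}]$ has only polynomially many ($\sim\ve^{-M-1}$) points, the paper simply takes a union bound over every lattice site, so no spatial H\"older estimate for $U^{(\ve)}$ is ever needed. In the time variable the paper uses a very fine grid of spacing $\ve^{10}$, and within a time cell it applies Kolmogorov to the increment bound $\|U_t^{(\ve)}(x)-U_s^{(\ve)}(x)\|_p^p \lesssim (t-s)^{p/2}\ve^{-2p}$. That bound is deliberately crude: the $\ve^{-2p}$ factor comes from the elementary estimates $\bigl|P_t^{(\ve)}*U_0^{(\ve)}(x)-P_s^{(\ve)}*U_0^{(\ve)}(x)\bigr|\lesssim (t-s)/\ve^2$ and $\|\text{D}_\ve(t,x)-\text{D}_\ve(s,x)\|_p\lesssim (t-s)/\ve^2$, which are trivial to prove (just $\sum_j\bigl|P(X_{t/\ve^2}=j)-P(X_{s/\ve^2}=j)\bigr|\lesssim(t-s)/\ve^2$) but blow up as $\ve\downarrow 0$. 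The $\ve^{10}$ grid spacing is chosen precisely to compensate for this $\ve^{-2p}$, so no uniform-in-$\ve$ H\"older regularity is required anywhere. You should be aware that your reference point, Theorem~2.5 of \cite{jose-khos-muel}, proceeds the same way; it does not prove what you call the ``completely analogous'' uniform H\"older modulus.

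Your proposal instead rests on a uniform-in-$\ve$ space-time H\"older estimate $\|U^{(\ve)}_t(x)-U^{(\ve)}_s(y)\|_p\lesssim |x-y|^\alpha+|t-s|^\beta$ with $\alpha<1/2$, $\beta<1/4$. This is a genuinely stronger assertion than anything in the paper or in \cite{jose-khos-muel}, and it is not a consequence of Lemma \ref{Apprx} (which compares drift terms for approximations of $u$, not increments of $U^{(\ve)}$), nor of Lemma \ref{lem:UeWe:mombd} (which gives moment bounds, not moduli). I believe the estimate is true --- for the discrete noise part one can run a Plancherel/local-CLT computation parallel to the continuous case and find that the constants stay bounded, and for the initial-profile part one can use smoothing of $P_t^{(\ve)}$ for $t\ge t_0>0$ --- but it would need to be proved from scratch, and it is the crux of your argument. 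As written, this is a gap: you assert the key technical input on the basis of a citation that does not contain it. The paper's route is precisely designed to sidestep this estimate, which is why it takes a union bound in space and a crude $\ve$-dependent bound in time.
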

\begin{proof}
As in the proof of Theorem 2.5 in \cite{jose-khos-muel} we need to bound $\left\| U_t^{(\ve)}(x) - U_s^{(\ve)}(x)\right\|_p^2$ for $ t_0\le s<t\le T$. The bounds for $\|\text{N}_{\ve}(t,x)- \text{N}_{\ve}(s,x)\|_p^2$ are as in that paper. We just need to bound $\|\text{D}_{\ve}(t,x)-\text{D}_{\ve}(s,x)\|_p^2$ and $\left|P_t^{(\ve)}*U_0^{(\ve)}(x) - P_s^{(\ve)}*U_0^{(\ve)}(x)\right|^2$. Firstly, from Lemma 3.3 of \cite{jose-ovha}
\begin{align*}
\left|P_t^{(\ve)}*U_0^{(\ve)}(x) - P_s^{(\ve)}*U_0^{(\ve)}(x)\right|  &\lesssim \sum_j \left|P(X_{\frac{t}{\ve^2}}=j) -P(X_{\frac{s}{\ve^2}}=j)  \right| \\
& \lesssim \frac{t-s}{\ve^2}.
\end{align*}
Similarly
\begin{align*}
\|\text{D}_{\ve}(t,x)-\text{D}_{\ve}(s,x)\|_p & \lesssim  \sum_{y\in \ve\Z} \int_0^s \left| P^{(\varepsilon)}_{t-r}(x-y)  - P^{(\varepsilon)}_{s-r}(x-y)\right|\cdot \left\|b\left(U^{(\varepsilon)}_r(y)\right)\right\|_p dr \\
&\qquad +\sum_{y\in \ve\Z} \int_s^t P^{(\varepsilon)}_{t-r}(x-y)  \left\|b\left(U^{(\varepsilon)}_r(y)\right)\right\|_p dr \\
&\lesssim \frac{t-s}{\ve^2}.
\end{align*}
Therefore 
\begin{align*} \left\| U_t^{(\ve)}(x) - U_s^{(\ve)}(x)\right\|_p^p&\lesssim \left(\frac{t-s}{\ve}\right)^{p/2}+\left(\frac{t-s}{\ve^2}\right)^{p} \\
&\lesssim \frac{(t-s)^{p/2}}{\ve^{2p}}.
\end{align*}
From this and Kolmogorov continuity theorem (Theorem 1.4.3 in \cite{spde-utah}) we get for fixed $p$ large enough and uniformly for $0<\ve<1$
\be \label{eq:u:k}\E\left[\sup_{\substack{t_0<s,t\le T\\|t-s|<\ve^{10}T}}\left|U^{(\ve)}_t(x)-U^{(\ve)}_s(x)\right|^p\right] \lesssim \ve^{p/2}.\ee
We also have for $ t_0\le s<t\le T$ (see Theorem 4.3.4 in \cite{dala-sanz}):
\[ \left\|u_t(x)-u_s(x)\right\|_p^p \lesssim (t-s)^{p/2}. \]
We similarly obtain using Kolmogorov continuity theorem
\be \label{eq:U:k}\E\left[\sup_{\substack{t_0<s,t\le T\\|t-s|<\ve^{10}T}}\left|u_t(x)-u_s(x)\right|^p\right] \lesssim \ve^{p/2}.\ee
Define $\mathscr{D}^{(\ve)}_t(x) = u_t(x) -U_t^{(\ve)}(x)$. As a consequence of \eqref{eq:u:k} and \eqref{eq:U:k} we obtain 
\be \label{eq:D:k}\E\left[\sup_{\substack{t_0<s,t\le T\\|t-s|<\ve^{10}T}}\left|\mathscr{D}^{(\ve)}_t(x)-\mathscr{D}^{(\ve)}_s(x)\right|^p\right] \lesssim \ve^{p/2}.\ee
Now for $\nu<1$ we obtain using Proposition \ref{prop:approx} and \eqref{eq:D:k}
\begin{align*}
& \bP\left(|\mathscr{D}^{(\ve)}_t(x)|>\ve^{\nu/2} \text{ for some } x\in \ve \Z,\, |x|\le \ve^{-M},\, t\in [t_0, T]\right)\\
&\qquad \le 2\ve^{-M-11}\sup_{x\in \ve \Z} \sup_{t\in [t_0, T]} \bP\left(|\mathscr{D}^{(\ve)}_t(x)|>\frac12\ve^{\nu/2} \right)\\
&\hspace{2cm}+ 2\ve^{-M-11}\sup_{x\in \ve \Z} \bP\left(\sup_{\substack{t_0\le s,t\le T\\ |t-s|<\ve^{10}T}}|\mathscr{D}^{(\ve)}_t(x)-\mathscr{D}^{(\ve)}_t(x)|>\frac12\ve^{\nu/2} \right) \\
&\qquad\lesssim \ve^{-M-11+p(1-\nu)/2},
\end{align*}
which tends to $0$ if we choose $p$ large enough. 
\end{proof}
We can finally give the
\begin{proof}[Proof of Theorem \ref{thm:compare}]
Let $V^{(\ve)}$ be the corresponding approximation for \eqref{eq:SHE:D} on the $\ve \Z$ lattice, similar to \eqref{eq:U_t:x}, but with Dirichlet boundary conditions at $x=0, 1$. 
Proposition \ref{prop:u:ut} implies for each $0<\ve<1$
\[ \bP\left(U_t^{(\ve)}( x)\ge V_t^{(\ve)}(x)\text{ for all } t\ge 0,\; x\in \ve \Z, \, x\in [0,1]\right) =1. \]
Theorem 3.1 of \cite{gyon-98} shows that for $0<t_0\le T$ 
\[ \sup_{t\in [t_0, T]} \sup_{\substack{x\in \ve \Z:\\ x\in [0,1]}} \left|V_t^{(\ve)}(x) - v_t(x)\right|\stackrel{P}{\rightarrow} 0 \quad \text{ as } \ve \downarrow 0. \]
From Proposition \ref{prop:U:u:prob} we obtain 
\[ \sup_{t\in [t_0, T]} \sup_{\substack{x\in \ve \Z:\\ x\in [0,1]}} \left|U_t^{(\ve)}(x) - u_t(x)\right|\stackrel{P}{\rightarrow} 0 \quad \text{ as } \ve \downarrow 0. \]
From this we can conclude 
\[ \bP\Big(u_t(x)\ge v_t(x)\text{ for all } t\ge 0,\; x\in [0,1]\Big) =1. \]
This completes the proof of \eqref{eq:compare}.

For \eqref{eq:compare:P:D} we consider $V^{(B),(\ve)}$, the corresponding approximation for \eqref{eq:SHE:D} with {\it periodic} or {\it Neumann} boundary conditions at $x=0, 1$, similar to \eqref{eq:U_t:x}. It is clear that $G^{(B)}$ the periodic/Neumann heat kernel is above the Dirichlet heat kernel $G^{(D)}$ (see \eqref{eq:G:D}). Therefore comparing the Alternating processes for $V^{(\ve)}$ and $V^{(B),(\ve)}$ we obtain 
\[ \bP\left(V^{(B), (\ve)}_t(x)\ge V_t^{(\ve)}(x)\text{ for all } t\ge 0,\; x\in [0,1]\right) =1, \]
and an approximation result as above then gives 
\[ \bP\Big(v^{(B)}_t(x)\ge v_t(x)\text{ for all } t\ge 0,\; x\in [0,1]\Big) =1. \]
This completes the proof of Theorem \ref{thm:compare}.
\end{proof}

\section{Proof of Theorem \ref{thm:line}}\label{sec:line}
\begin{proof}[Proof of Theorem \ref{thm:line}]
Consider equation \eqref{eq:SHE:D} on $[-2M, 2M]$ with Dirichlet boundary conditions and initial profile $v_0\le u_0$, such that $v_0$ is positive in a neighborhood of $0$ and the support of $v_0$ contained in $[-2M, 2M]$. By Mueller's comparison theorem \cite{muel} and Theorem \ref{thm:compare}, $u$ remains above $v$. Statement \ref{2:state:1} then follows from the first statement of Theorem \ref{thm:dirichlet}.

We next prove the second statement of Theorem \ref{thm:line}. Let $0\le h_0\le 1$ be a function supported on $[-2M, 2M]$ which is positive in a neighborhood of $0$. Let $v^{(k)}$ be the solution \eqref{eq:SHE:D} on \begin{math}[4kM-2M,\, 4kM+2M]\end{math} with Dirichlet boundary conditions and initial profile $v_0^{(k)}(x)=h_0(x-4kM)$.  From Mueller's comparison theorem \cite{muel} and Theorem \ref{thm:compare}, $u\ge v^{(k)}$ for all $k$. Since the white noise in each of the intervals $[4kM-2M,\, 4kM+2M]$ are independent, the $v^{(k)}$ are independent {\it shifted} copies of $v^{(0)}$. Statement \ref{2:state:2} then follows from Statement \ref{2:state:1}. 

Finally we turn to the third statement. We will argue that $\bP(\inf_{x\in [0,1]} u_{\delta}(x) =\infty)=1$. From the argument it will be clear that we could just as well consider the infimum over $[k, k+1]$ for any $k \in \Z$. This will prove the statement. 
Statement \ref{2:state:2} implies that with probability $1$ there is a point $p\in \R$ such that 
\[ \sup_{t\le \delta/2}\left(\inf_{x\in [p-1, p+1]} u_t(x)\right)=\infty.\] 
For this $p$ let $\tau\le \delta/2$ be the random time at which the above event happens. Fix $N$ arbitrarily large. Now Mueller's comparison principle and \cite{geis-mant} implies that $u_{\delta}(x)$ has to be larger than $\widetilde u_{\delta}^{(N)}(x)$ which solves
\be\label{eq:utilde} \partial_t\widetilde u^{(N)}= \frac12 \partial_x^2 \widetilde u^{(N)}+ \beta \widetilde u^{(N)} \dot{W},\quad t\ge \tau, \, x\in \R,\ee
with $\widetilde u^{(N)}_{\tau}(x) = N\mathbf{1}_{x\in [p-1,p+1]}$. Theorem 1 of \cite{muel} implies strict positivity of $\widetilde u_t^{(N)}$ for all $t> \tau$. Therefore for each $\epsilon>0$ there is a $L=L(\epsilon, N)>0$ such that 
\[ \bP\left(\inf_{t\in [\tau+\frac\delta 4, \tau+\delta]}\left(\inf_{x\in [0,1]} \widetilde u_t^{(N)}(x)\right) \ge L\right)= 1-\epsilon.\]
Since $C\widetilde u^{(N)}\stackrel{d}{=}\widetilde u^{(CN)}$  one sees that $L(\epsilon, N)= NL(\epsilon, 1)$. By the arbitrariness of $N$ and $\epsilon$ we can then conclude
\[  \bP\left(\inf_{t\in [\tau, \tau+\delta]}\left(\inf_{x\in [0,1]}  u_t(x)\right) =\infty\right)=1.\]
The proof of Theorem \ref{thm:line} is complete.
\end{proof}

\section{additional lemmas}\label{sec:appendix}
The following two lemmas were used in the proof of Lemma \ref{Apprx6}.
\begin{lem}  \label{lem:Pe}We have for any $t>0,\, \delta>0,\,0<\ve<1$ and $x\in\ve \Z$
\[ \sup_{0\le r\le t}\int_r^{r+\delta} \int_{\R}\left[ \frac{P^{(\varepsilon)}_{s}(\overline y-x)}{\ve}\right]^2 dyds \lesssim \sqrt \delta, \]
uniformly in $\varepsilon$.
\end{lem}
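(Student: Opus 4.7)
The plan is to reduce the integral to a discrete sum over the lattice, exploit translation invariance, and then apply a Parseval-type bound to control the resulting sum uniformly in $s$.

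First I would observe that for $x \in \ve\Z$, the map $y \mapsto P^{(\ve)}_s(\overline y - x)$ is constant on each interval $[j\ve, (j+1)\ve)$, so
\bes
\int_{\R}\left[\frac{P^{(\varepsilon)}_{s}(\overline y-x)}{\ve}\right]^2 dy
\;=\; \ve \sum_{j\in\Z} \left[\frac{P^{(\varepsilon)}_{s}(j\ve-x)}{\ve}\right]^2
\;=\; \frac{1}{\ve}\sum_{k\in\Z} P(X_{s/\ve^2}=k)^2,
\ees
where the last equality uses the translation invariance of the random walk and the identification $P^{(\ve)}_s(j\ve) = P(X_{s/\ve^2}=j)$.

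Next I would bound the sum $\sum_k P(X_t=k)^2$ uniformly in $t\ge 0$. By Parseval applied to the characteristic function $\hat\mu(\theta) = \cos\theta$ of the step distribution, one has
\bes
\sum_{k\in\Z} P(X_t=k)^2 \;=\; \frac{1}{2\pi}\int_{-\pi}^{\pi} e^{-2t(1-\cos\theta)}\, d\theta.
\ees
Splitting at $|\theta| = 1/\sqrt{1+t}$ and using $1-\cos\theta \gtrsim \theta^2$ on $[-\pi,\pi]$ gives the standard local-limit bound
\bes
\sum_{k\in\Z} P(X_t=k)^2 \;\lesssim\; \frac{1}{\sqrt{1+t}}.
\ees
This bound is also available from Lemma 3.3 of \cite{jose-khos-muel}, so I would simply cite that source for the estimate.

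Substituting $t=s/\ve^2$ yields
\bes
\frac{1}{\ve}\sum_{k\in\Z} P(X_{s/\ve^2}=k)^2 \;\lesssim\; \frac{1}{\ve}\cdot \frac{\ve}{\sqrt{\ve^2+s}} \;=\; \frac{1}{\sqrt{\ve^2+s}},
\ees
so integrating in $s$ and using $\sqrt{a+b}-\sqrt{a}\le \sqrt{b}$ gives
\bes
\int_r^{r+\delta} \frac{ds}{\sqrt{\ve^2+s}} \;=\; 2\sqrt{\ve^2+r+\delta} - 2\sqrt{\ve^2+r} \;\le\; 2\sqrt\delta,
\ees
which is the desired estimate, uniform in $r$, $x$, and $\ve$. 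The only mild obstacle is the uniform control at small $s$ (the regime $s\lesssim \ve^2$): here the Parseval bound continues to give $\sum_k P(X_{s/\ve^2}=k)^2 \lesssim 1$, which after dividing by $\ve$ contributes $\int_0^{\ve^2}\ve^{-1}ds = \ve$, again harmlessly absorbed into $\sqrt\delta$ whenever $\delta\gtrsim\ve^2$ (and handled trivially otherwise). For general $x\notin\ve\Z$ the same argument applies after noting that $\overline y - x$ still takes at most countably many values, differing from the $x\in\ve\Z$ case only by a harmless boundary adjustment in the discretisation.
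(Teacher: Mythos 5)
Your proposal is correct and follows essentially the same approach as the paper: discretise the spatial integral to a lattice sum, pass to the Fourier/Parseval representation $\frac{1}{\ve}\int_{-\pi}^{\pi}\exp(-\frac{2s}{\ve^2}(1-\hat\mu(z)))\,dz$, bound it by a Gaussian integral to get $\lesssim 1/\sqrt{s}$ (you get the marginally sharper $1/\sqrt{\ve^2+s}$, which is not needed), and integrate in $s$. The closing remarks about the small-$s$ regime and $x\notin\ve\Z$ are superfluous, since your main bound already holds uniformly and the lemma only asserts the claim for $x\in\ve\Z$.
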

\begin{proof} Since $1-\hat\mu(z) \ge cz^2$ in the interval $[-\pi, \pi]$ one has
\begin{align*}
 \int_{\R}\left[ \frac{P^{(\varepsilon)}_{s}(\overline y-x)}{\ve}\right]^2 dy& = \frac{1}{\ve} \int_{-\pi}^{\pi} \exp\left(-\frac{2s}{\ve^2}(1-\hat\mu(z))\right) dz \\
&\le \frac{1}{\ve}\int_{-\pi}^{\pi} \exp\left(-\frac{2csz^2}{\ve^2}\right) dz \\
&\lesssim \frac{1}{\sqrt s}.
\end{align*}
The lemma follows. 
\end{proof}

\begin{lem} \label{lem:UeWe:mombd}
Fix $p\ge 2$ and $T>0$. Then  
    \begin{align*}
        \sup_{0<\varepsilon<1}\,\sup_{x\in\varepsilon\Z,\, 0\le t\le T}\|U_t^{(\varepsilon)}(x)\|_p<\infty \text{ and } \sup_{0<\varepsilon<1}\sup_{x\in\varepsilon\Z,\, 0\le t\le T} \|U_t^{(2, \varepsilon)}(x)\|_p <\infty.
    \end{align*}
\end{lem}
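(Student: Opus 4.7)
The plan is to handle the two random fields separately: $U^{(2,\ve)}$ directly and $U^{(\ve)}$ via a singular Gronwall argument. In both cases the tools are Minkowski's inequality for the drift, the Burkholder--Davis--Gundy inequality for the stochastic integral, the Lipschitz growth bounds $|b(z)|\le |b(0)|+C|z|$ and $|\sigma(z)|\le C|z|$ (recall $\sigma(0)=0$), and two elementary facts about $P^{(\ve)}$: the mass identity $\sum_{y\in\ve\Z}P^{(\ve)}_s(x-y)=1$ (equivalently $\int_\R\ve^{-1}P^{(\ve)}_s(\overline{y}-\overline{x})\,dy=1$) and the $L^2$ bound $\tfrac{1}{\ve}\sum_{y\in\ve\Z}\bigl[P^{(\ve)}_s(x-y)\bigr]^2\lesssim 1/\sqrt{s}$, the latter following from the same Fourier argument used in Lemma \ref{lem:Pe}.

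For $U^{(2,\ve)}$ the bound is essentially immediate, since the right-hand side of its mild form is driven by $u_s(\overline{y})$, whose $L^p$-norm is already uniformly controlled by Lemma \ref{lem:mom:hold}, and the initial contribution $(p_t*u_0)(x)$ is bounded by $\|u_0\|_\infty$. Applying Minkowski and BDG together with the two identities above yields $\|\mathcal D_{U^{(2,\ve)}}(t,x)\|_p\lesssim T$ and $\|\mathcal N_{U^{(2,\ve)}}(t,x)\|_p^{2}\lesssim \sqrt{T}$, whence the desired uniform bound on $\|U^{(2,\ve)}_t(x)\|_p$.

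For $U^{(\ve)}$, the same estimates applied to the mild form \eqref{U_t:x} give, with $M_\ve(t):=\sup_{y\in\ve\Z,\,s\le t}\|U^{(\ve)}_s(y)\|_p^{2}$, the singular integral inequality
\begin{equation*}
M_\ve(t)\le C_0+C_1\int_0^t\Bigl(1+\frac{1}{\sqrt{t-s}}\Bigr)M_\ve(s)\,ds,
\end{equation*}
with $C_0,C_1$ depending only on $T$, $p$, $\|u_0\|_\infty$, and the Lipschitz constants of $b,\sigma$. A singular Gronwall inequality (Lemma C.1.3 of \cite{dala-sanz}) then yields $M_\ve(t)\le C_T$ uniformly in $\ve\in(0,1)$ and $t\in[0,T]$. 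The only subtle point, and the main obstacle I anticipate, is verifying that $M_\ve(t)<\infty$ prior to applying Gronwall, since the inequality would otherwise be vacuous; this is handled by the standard truncation trick. Replace $b,\sigma$ by bounded cutoffs $b_R,\sigma_R$, for which $M_\ve^R(t)$ is trivially finite for each fixed $\ve$; apply the Gronwall step to obtain an $R$-independent bound; and remove the cutoff at the stopping time $\tau_R:=\inf\{t:\sup_{y\in\ve\Z}U^{(\ve)}_t(y)\ge R\}$ via pathwise uniqueness and monotone convergence, which is routine under the global Lipschitz assumption in force throughout Section \ref{sec:comparison}.
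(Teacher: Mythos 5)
Your proof is correct and follows essentially the same route as the paper: Burkholder/Minkowski bounds on the mild form, the mass identity and the $L^2$ kernel estimate $\ve^{-1}\sum_y [P^{(\ve)}_s(y)]^2\lesssim s^{-1/2}$ (Lemma \ref{lem:Pe}), and then the singular Gronwall inequality of Lemma C.1.3 in \cite{dala-sanz}. The only place you add anything is the explicit truncation argument for the a priori finiteness of $M_\ve(t)$, a point the paper passes over with ``it follows from standard arguments that this is finite for any fixed $\ve$''; your elaboration is a correct way to justify that step (though for fixed $\ve$ the kernel is nonsingular, so a non-singular Gronwall for the global-Lipschitz system already gives finiteness directly, without cutoffs).
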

\begin{proof}
  We have the following bound using Burkholder's inequality
    \begin{align*}
        \left\|U_t^{(\varepsilon)}(x) \right\|_p^2 &\lesssim \left|\left(P_t^{(\varepsilon)}*U_0^{(\varepsilon)}\right)(x)\right|^2 + \left\| \int_0^t\sum_{y\in\varepsilon\Z} P_{t-s}^{(\varepsilon)}(y-x) b\left(U_s^{(\varepsilon)}(y)\right) ds \right\|_p^2 \\
        &\qquad+ \frac{1}{\varepsilon}\left\| \int_0^t\sum_{y\in\varepsilon\Z} P_{t-s}^{(\varepsilon)}(y-x) \sigma\left(U_s^{(\varepsilon)}(y)\right) dB^{(\varepsilon)}_s (y)\right\|_p^2 \\
        &\lesssim \left|\sum_{ y\in \varepsilon\Z} \frac{P^{(\varepsilon)}_{t}( y - x)}{\varepsilon} \int_{ y}^{y + \varepsilon} u_0(z) dz\right|^2 + t \int_0^t\sum_{y\in\varepsilon\Z} P_{t-s}^{(\varepsilon)}(y-x) \left\| U_s^{(\varepsilon)}(y) \right\|_p^2 ds \\
        &\qquad + \varepsilon\int_0^t\sum_{y\in\varepsilon\Z} \left|\frac{P_{t-s}^{(\varepsilon)}(y-x)}{\varepsilon}\right|^2 \left\|U_s^{(\varepsilon)}(y)\right\|_p^2 ds.
    \end{align*}
    Define $\mathscr{P}^{(\varepsilon)}(t) := \sup_{x\in\ve\Z} \left\|U_t^{(\varepsilon)}(x) \right\|_p^2$. It follows from standard arguments that this is finite for any fixed $\ve$. 
Since $u_0$ is bounded we can rewrite the above as   
\begin{align*}
        \mathscr{P}^{(\varepsilon)}(t) \lesssim 1 + t\int_0^t\mathscr{P}^{(\varepsilon)}(s) ds + \int_0^t \int_\R \left|\frac{P_{t-s}^{(\varepsilon)}(\overline y-x)}{\varepsilon}\right|^2 \mathscr{P}^{(\varepsilon)}(s) dyds. 
    \end{align*}
Using Lemma \ref{lem:Pe} we obtain for constants $D_1, D_2$ dependent only on $T$ 
\[\mathscr{P}^{(\ve)}(t) \le D_1+ D_2 \int_0^t\frac{\mathscr{P}^{(\ve)}(s)}{\sqrt{t-s}}\, ds.\] 
Using the argument following \eqref{eq:S:S} we obtain 
    \begin{align*}
        \sup_{0<\ve<1}\sup_{0\le t\le T}\mathscr{P}^{(\varepsilon)}(t) <\infty,
    \end{align*}
as required. The proof of the second statement is simpler. It only uses uniform bounds on $u$ from Lemma \ref{lem:mom:hold} and is left to the reader. 
\end{proof}

\begin{lem}\label{lem:b:ass} Fix $\eta\ge 0$ and $n \ge 1$. The function 
\[ b(x)=x(\log x)(\log \log x)\cdots (\underbrace{\log\log...\log}_{(n-1) \text{times}} x)(\underbrace{\log\log...\log}_{n \text{ times}} x)^{1+\eta}\]
satisfies condition (\ref{ass:3}) of Assumption \ref{ass:D}.
\end{lem}
\begin{proof} For any $\mathbf{C}_1>1$ we have for large $X$
\begin{align*}&\int_{X}^{\mathbf{C}_1 X} \frac{dz}{b(z)} \\
&\ge \frac{1}{(\log \mathbf{C}_1X)(\log \log \mathbf{C}_1X)\cdots (\underbrace{\log\log...\log}_{(n-1) \text{times}} \mathbf{C}_1X)(\underbrace{\log\log...\log}_{n \text{ times}} \mathbf{C}_1X)^{1+\eta}}\int_{X}^{\mathbf{C}_1 X}\frac{dz}{z} \\
&\ge \frac{\log \mathbf{C}_1}{(\log \mathbf{C}_1X)(\log \log \mathbf{C}_1X)\cdots (\underbrace{\log\log...\log}_{(n-1) \text{times}} \mathbf{C}_1X)(\underbrace{\log\log...\log}_{n \text{ times}} \mathbf{C}_1X)^{1+\eta}}.\end{align*}
Note that for any fixed $\mathbf C_1>1$ and any fixed $m\ge 1$
\[ \frac{(\log\log...\log \mathbf{C}_1X)}{(\log\log...\log \frac{X}{32})} \stackrel{X\to \infty}{\longrightarrow} 1,\]
where the $\log\log \cdots \log$ appears $m$ times. As a consequence if we choose $\log \mathbf{C}_1>32^2$ we have 
\[\int_{X}^{\mathbf{C}_1 X} \frac{dz}{b(z)} >  \frac{32X}{b(X/32)}\]
for all large $X$. 
\end{proof}

\subsection{Acknowledgments:} M.J. was partially supported by ANRF grant CRG/2023/002667 and a CPDA grant from the Indian Statistical Institute. We thank Michael Salins for pointing out an error in a previous version of this paper, and Mohammud Foondun for useful discussions.

\def\cprime{$'$}


\begin{thebibliography}{10}

\bibitem{athr-jose-muel}
Siva Athreya, Mathew Joseph, and Carl Mueller.
\newblock Small ball probabilities and a support theorem for the stochastic heat equation.
\newblock {\em Ann. Probab.}, 49(5):2548--2572, 2021.

\bibitem{ball-mill-sole}
Vlad Bally, Annie Millet, and Marta Sanz-Sol\'{e}.
\newblock Approximation and support theorem in {H}\"{o}lder norm for parabolic stochastic partial differential equations.
\newblock {\em Ann. Probab.}, 23(1):178--222, 1995.

\bibitem{chen-foon-huan-sali}
Le~Chen, Mohammud Foondun, Jingyu Huang, and Michael Salins.
\newblock Global solution for superlinear stochastic heat equation on {$\Bbb R^d$} under {O}sgood-type conditions.
\newblock {\em Nonlinearity}, 38(5):Paper No. 055026, 2025.

\bibitem{chen-huan}
Le~Chen and Jingyu Huang.
\newblock Superlinear stochastic heat equation on {$\Bbb{R}^d$}.
\newblock {\em Proc. Amer. Math. Soc.}, 151(9):4063--4078, 2023.

\bibitem{spde-utah}
Robert Dalang, Davar Khoshnevisan, Carl Mueller, David Nualart, and Yimin Xiao.
\newblock {\em A minicourse on stochastic partial differential equations}, volume 1962 of {\em Lecture Notes in Mathematics}.
\newblock Berlin, 2009.

\bibitem{dala-khos-zhan}
Robert~C. Dalang, Davar Khoshnevisan, and Tusheng Zhang.
\newblock Global solutions to stochastic reaction-diffusion equations with super-linear drift and multiplicative noise.
\newblock {\em Ann. Probab.}, 47(1):519--559, 2019.

\bibitem{dala-sanz}
Robert~C. Dalang and Marta Sanz-Solé.
\newblock Stochastic partial differential equations, space-time white noise and random fields, 2025.

\bibitem{bond-groi}
Julian Fern\'andez~Bonder and Pablo Groisman.
\newblock Time-space white noise eliminates global solutions in reaction-diffusion equations.
\newblock {\em Phys. D}, 238(2):209--215, 2009.

\bibitem{foon-jose-li}
Mohammud Foondun, Mathew Joseph, and Shiu-Tang Li.
\newblock An approximation result for a class of stochastic heat equations with colored noise.
\newblock {\em Ann. Appl. Probab.}, 28(5):2855--2895, 2018.

\bibitem{foon-khos-nual}
Mohammud Foondun, Davar Khoshnevisan, and Eulalia Nualart.
\newblock Instantaneous everywhere-blowup of parabolic {SPDE}s.
\newblock {\em Probab. Theory Related Fields}, 190(1-2):601--624, 2024.

\bibitem{foon-khos-nual-local}
Mohammud Foondun, Davar Khoshnevisan, and Eulalia Nualart.
\newblock On the local well-posedness of randomly forced reaction-diffusion equations with $l^2$ initial data and a superlinear reaction term, 2025.

\bibitem{foon-nual}
Mohammud Foondun and Eulalia Nualart.
\newblock The {O}sgood condition for stochastic partial differential equations.
\newblock {\em Bernoulli}, 27(1):295--311, 2021.

\bibitem{geis-mant}
Christel Gei{\ss} and Ralf Manthey.
\newblock Comparison theorems for stochastic differential equations in finite and infinite dimensions.
\newblock {\em Stochastic Process. Appl.}, 53(1):23--35, 1994.

\bibitem{gyon-98}
Istv\'an Gy\"ongy.
\newblock Lattice approximations for stochastic quasi-linear parabolic partial differential equations driven by space-time white noise. {I}.
\newblock {\em Potential Anal.}, 9(1):1--25, 1998.

\bibitem{jose-khos-muel}
Mathew Joseph, Davar Khoshnevisan, and Carl Mueller.
\newblock Strong invariance and noise-comparison principles for some parabolic stochastic {PDE}s.
\newblock {\em Ann. Probab.}, 45(1):377--403, 2017.

\bibitem{jose-ovha}
Mathew Joseph and Shubham Ovhal.
\newblock Instantaneous blowup for interacting sdes with superlinear drift.
\newblock {\em Electronic Journal of Probability}, 31(none):1--31, 2026.

\bibitem{muel}
Carl Mueller.
\newblock On the support of solutions to the heat equation with noise.
\newblock {\em Stochastics Stochastics Rep.}, 37(4):225--245, 1991.

\bibitem{sali-3}
M.~Salins.
\newblock Existence and uniqueness for the mild solution of the stochastic heat equation with non-{L}ipschitz drift on an unbounded spatial domain.
\newblock {\em Stoch. Partial Differ. Equ. Anal. Comput.}, 9(3):714--745, 2021.

\bibitem{sali-4}
M.~Salins.
\newblock Existence and uniqueness of global solutions to the stochastic heat equation with super-linear drift on an unbounded spatial domain.
\newblock {\em Stochastics and Dynamics}, 22(5), 2021.

\bibitem{salins2025}
M.~Salins.
\newblock Global solutions to the stochastic heat equation with superlinear accretive reaction term and polynomially growing multiplicative white noise coefficient.
\newblock {\em Stochastics and Partial Differential Equations: Analysis and Computations}, 2025.

\bibitem{sali-2}
Michael Salins.
\newblock Global solutions for the stochastic reaction-diffusion equation with super-linear multiplicative noise and strong dissipativity.
\newblock {\em Electron. J. Probab.}, 27:Paper No. 12, 17, 2022.

\bibitem{sali}
Michael Salins.
\newblock Global solutions to the stochastic reaction-diffusion equation with superlinear accretive reaction term and superlinear multiplicative noise term on a bounded spatial domain.
\newblock {\em Trans. Amer. Math. Soc.}, 375(11):8083--8099, 2022.

\bibitem{shan-zhan-2}
Shijie Shang and Tusheng Zhang.
\newblock Stochastic heat equations with logarithmic nonlinearity.
\newblock {\em J. Differential Equations}, 313:85--121, 2022.

\bibitem{shan-zhan}
Shijie Shang and Tusheng Zhang.
\newblock Global well-posedness to stochastic reaction-diffusion equations on the real line {$\Bbb R$} with superlinear drifts driven by multiplicative space-time white noise.
\newblock {\em Electron. J. Probab.}, 28:Paper No. 166, 29, 2023.

\bibitem{wals}
John~B. Walsh.
\newblock An introduction to stochastic partial differential equations.
\newblock In {\em \'{E}cole d'\'et\'e de probabilit\'es de {S}aint-{F}lour, {XIV}---1984}, volume 1180 of {\em Lecture Notes in Math.}, pages 265--439. Springer, Berlin, 1986.

\end{thebibliography}
\end{document}